\newcommand{\lam}{\lambda}
\newcommand{\R}{\mathbb{R}}
\newcommand{\N}{\mathbb{N}}
\newtheorem{defn}{Definition}[section]
\newtheorem{thm}{Theorem}[section]
\newtheorem{lemma}{Lemma}[section]
\newtheorem{example}{Example}[section]
\newtheorem{remark}{Remark}[section]
\journal{Journal of Computational and Applied Mathematics}
\begin{document}
	
	\begin{frontmatter}
		
		
		\title{An Efficient Numerical Approach for Solving Two-Point Fractional Order Nonlinear Boundary Value Problems with Robin Boundary Conditions}
		\author[label1]{Junseo Lee}
		\ead{junseo.lee@unist.ac.kr}
		
		\author[label1]{Bongsoo Jang}
\ead{bsjang@unist.ac.kr}
\address[label1]{Department of Mathematical Science, Ulsan National Institute of Science and Technology(UNIST), Ulsan 44919, Republic of Korea}

\author[label2]{Hyunju Kim\corref{cor2}}
\ead{hkim@ngu.edu}
\address[label2]{Department of Mathematics, North Greenville University, Tigerville, SC USA 29688}		

\cortext[cor2]{Corresponding author. Tel:+01 864 977 2190}


		\date{\today}
		
		\begin{abstract}
			This article proposes new strategies for solving two-point Fractional order Nonlinear Boundary Value Problems (FNBVPs) with Robin Boundary Conditions (RBCs). In the new numerical schemes, a two-point FNBVP is transformed into a system of Fractional order Initial Value Problems (FIVPs) with unknown Initial Conditions (ICs). To approximate ICs in the system of FIVPs, we develop nonlinear shooting methods based on Newton's method and Halley's method using the RBC at the right end point.
			To deal with FIVPs in a system, we mainly employ High-order Predictor-Corrector Methods (HPCMs) with linear interpolation and quadratic interpolation \cite{nguyen2017high} into Volterra integral equations which are equivalent to FIVPs. 
			The advantage of proposed schemes with HPCMs is that even though they are designed for solving two-point FNBVPs, they can handle both linear and nonlinear two-point Fractional order Boundary Value Problems (FBVPs) with RBCs and have uniform convergence rates of HPCMs, $\mathcal{O}(h^2)$ and $\mathcal{O}(h^3)$ for shooting techniques with Newton's method and Halley's method, respectively.
			A variety of numerical examples are demonstrated to confirm the effectiveness and performance of the proposed schemes. Also we compare the accuracy and performance of our schemes with another method.\\
			
		\end{abstract}
		
		
		\begin{keyword}
			Two-point fractional order nonlinear boundary value problem \sep Fractional order boundary value problem \sep Caputo derivative \sep Nonlinear shooting method \sep Predictor-corrector scheme \sep System of fractional order initial value problems
			
			
		\end{keyword}
		
	\end{frontmatter}
	
	\section{Introduction}
	
	Fractional calculus has proven to describe many phenomena in science and engineering more accurately than integer order calculus because of the non-local property of the fractional derivative \cite{Caputo, Dalir, Das, Hilfer, Magin, Podlubny}. Many authors have introduced analytical and numerical methods of the Fractional order Initial Value Problems (FIVPs).
	Relatively, Fractional order Boundary Value Problems (FBVPs) have been paid less attention than FIVPs. Moreover, numerical methods of FNBVPs with Robin Boundary Conditions (RBCs) have hardly been studied.
	In this paper, we consider the two-point Fractional order Nonlinear Boundary Value Problem (FNBVP) with RBCs:
	\begin{equation}\label{eq:1}
	\begin{cases}
	D_{a}^{\alpha_2}y(t) = f(t,y,D_{a}^{\alpha_1}y(t)), \ t\in[a,b],\\
	a_1y(a)+b_1y'(a) = \gamma_1, \ a_2y(b)+b_2y'(b)=\gamma_2,
	\end{cases}
	\end{equation}
	where  $0 < \alpha_1 \le 1, ~1 < \alpha_2 < 2, ~\alpha_1,\alpha_2, \gamma_t \in \R$. $D_{a}^{\alpha_1}$ and $D_{a}^{\alpha_2}$ are Caputo fractional differentiations defined as follows:
	\begin{defn}\label{def-RL}
		Let $\alpha\in \R^{+}$. The operator $J^{\alpha}_{a,t}$, defined on $L_1[a,b]$ by
		\begin{equation*}
		J^{\alpha}_{a}y(t) := \frac{1}{\Gamma(\alpha)}\int_{a}^{t}(t-\tau)^{\alpha-1}y(\tau)\,d\tau,
		\end{equation*}
		for $a \leq t \leq b$, is called the Riemann-Liouville fractional integral operator of order $\alpha$.
	\end{defn}
	For $\alpha = 0 $, $J^{0}_{a} = I$, the identity operator.
	
	\begin{defn} 
		let $\alpha \in \R^{+}$. The operator $D^\alpha_a$ is defined by
		\begin{equation*}
		{}D^{\alpha}_{a}y(t) := J^{\lceil \alpha \rceil-\alpha}_aD^{\lceil \alpha \rceil}y(t) = \frac{1}{\Gamma(\lceil \alpha \rceil - \alpha)}\int_{a}^{t}(t-\tau)^{\lfloor \alpha \rfloor - \alpha}y^{(\lceil \alpha \rceil)}(\tau)d\tau,
		\end{equation*}
		where $\lceil~~\rceil$ is the ceiling function and $\lfloor~~\rfloor$ is the floor function.
	\end{defn}
	The multi-term Caputo sense FIVP can be transformed into the system of FIVPs by the following Theorem \ref{Transform} \cite{diethelm2010analysis}:
	
	\begin{thm}\label{Transform}
		Let us consider the following multi-term Caputo sense fractional differential equation with initial conditions,
		\begin{equation} \label{eq:2}
			\begin{cases}
				D^{\alpha_n}_ay(t) = f(t,y(t),D^{\alpha_1}_ay(t),D^{\alpha_2}_ay(t),\cdots,D^{\alpha_{n-1}}_ay(t)),\\
				y^{(j)}(a) = y^{(j)}_{a},~j = 0,1,\cdots,\lfloor \alpha_n  \rfloor,
			\end{cases}
		\end{equation}
		where $\alpha_n > \alpha_{n-1} > \cdots \alpha_1 > 0,~\alpha_i-\alpha_{i-1} \leq 1$ for all $i=2,3,\cdots,n$ and $0 < \alpha_1 < 1$. Then, we can define $\beta_i$,
		\begin{equation*}
			\begin{cases}
				\beta_1 := \alpha_1,\\
				\beta_i := \alpha_i - \alpha_{i-1}, & i = 2,3,\cdots,n.\\
			\end{cases}
		\end{equation*}
		Then, the multi-term fractional differential equation with initial conditions \eqref{eq:2} is equivalent to the following system of fractional differential equations:
		\begin{subequations}\label{eq:3}
			\begin{equation}\label{eq:3.1}
				\begin{aligned}
					D^{\beta_1}_ay_1(t) &= y_2(t),\\
					D^{\beta_2}_ay_2(t) &= y_3(t),\\
					\vdots\\
					D^{\beta_{n-1}}_ay_{n-1} &= y_n(t),\\
					D^{\beta_{n}}_ay_{n} &= f(t,y_1,y_2,\cdots,y_{n-2},y_{n-1}),
				\end{aligned}
			\end{equation}
			together with the initial conditions:
			\begin{equation}\label{eq:3.2}
				y_i	(t_0) = 
				\begin{cases}
					y^{(0)} & \text{if}~i=1,\\
					y^{(l)} & \text{if}~\alpha_{i-1}=l\in\N,\\
					0 & \text{else},
				\end{cases}
			\end{equation}
		\end{subequations}
		in the following sense:\\
		1. Whenever the function $y \in C^{\lceil \alpha_n \rceil}[a,b]$ is a solution of the multi-term equation with initial conditions \eqref{eq:2}, the vector-valued function $Y := (y_1,\cdots,y_n)^T$ with
		\begin{equation*}
			y_i(t) :=
			\begin{cases}
				y(t) & \text{if}~i=1,\\
				D^{\alpha_{i-1}}_ay(t) & \text{if}~i\neq 1,
			\end{cases}
		\end{equation*}
		is a solution of the system of fractional differential equations \eqref{eq:3.1} with initial conditions \eqref{eq:3.2}\\
		2. Whenever the vector-valued function $Y := (y_1,\cdots,y_n)^T$ is a solution of the system of multi-order fractional differential equations \eqref{eq:3.1} with initial conditions \eqref{eq:3.2}, the function $y:=y_1$ is a solution of the multi-term equation with initial conditions \eqref{eq:2}.
	\end{thm}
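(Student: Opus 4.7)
The plan is to argue the two directions separately. The central technical tool is a composition identity for Caputo derivatives,
\[
D^{\beta_i}_a D^{\alpha_{i-1}}_a y \;=\; D^{\alpha_i}_a y, \qquad i=2,\ldots,n,
\]
which does not hold in full generality for Caputo operators but is valid under the present hypotheses ($0<\alpha_1<1$, $\beta_i=\alpha_i-\alpha_{i-1}\le 1$, and $y\in C^{\lceil\alpha_n\rceil}[a,b]$). Once this identity is available, the forward direction reduces to telescoping and the converse reduces to chaining Volterra integral equations.

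For the forward direction, I would assume $y$ solves \eqref{eq:2} and set $y_1:=y$, $y_i:=D^{\alpha_{i-1}}_a y$ for $i\ge 2$. The system \eqref{eq:3.1} then follows by a direct computation: $D^{\beta_1}_a y_1 = D^{\alpha_1}_a y = y_2$; for $2\le i<n$, $D^{\beta_i}_a y_i = D^{\beta_i}_a D^{\alpha_{i-1}}_a y = D^{\alpha_i}_a y = y_{i+1}$ by the composition identity; and $D^{\beta_n}_a y_n = D^{\alpha_n}_a y = f(t,y_1,\ldots,y_{n-1})$ is just \eqref{eq:2} rewritten. The initial conditions \eqref{eq:3.2} are then verified by evaluating $y_i$ at $t=a$: when $\alpha_{i-1}=l\in\N$ the Caputo derivative reduces to the classical $l$-th derivative and yields $y^{(l)}(a)$; otherwise the defining Caputo integral $\frac{1}{\Gamma(\lceil\alpha_{i-1}\rceil-\alpha_{i-1})}\int_a^t (t-\tau)^{\lceil\alpha_{i-1}\rceil-\alpha_{i-1}-1} y^{(\lceil\alpha_{i-1}\rceil)}(\tau)\,d\tau$ vanishes at $t=a$ because its interval of integration collapses while $y^{(\lceil\alpha_{i-1}\rceil)}$ is continuous by the assumed smoothness.

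For the converse, I would use the Volterra integral reformulation of \eqref{eq:3.1}. Since $0<\beta_i\le 1$, each equation $D^{\beta_i}_a y_i = (\text{RHS})$ is equivalent to $y_i(t) = y_i(a) + J^{\beta_i}_a(\text{RHS})(t)$. Substituting these upward from $i=n$ to $i=1$ and invoking the unconditional semigroup law $J^{\beta}_a J^{\gamma}_a = J^{\beta+\gamma}_a$ for the Riemann--Liouville integral produces a single Volterra integral equation for $y_1$. The assignments in \eqref{eq:3.2}, which are nonzero only where $\alpha_{i-1}\in\N$, assemble precisely into the Taylor polynomial $\sum_j y^{(j)}_a(t-a)^j/j!$ prescribed by the ICs of \eqref{eq:2}. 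Applying $D^{\alpha_n}_a$ to this Volterra equation and using the left-inverse relation between $D^{\alpha_n}_a$ and $J^{\alpha_n}_a$ then recovers \eqref{eq:2}, while the initial conditions $y^{(j)}(a)=y^{(j)}_a$ can be read off the polynomial part.

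The main obstacle is the Caputo composition identity used in the forward direction, since $D^{\beta}_a D^{\alpha}_a \neq D^{\alpha+\beta}_a$ in general for Caputo operators. I would handle it by passing to the Riemann--Liouville form: writing $D^{\alpha_{i-1}}_a y = D^{\alpha_{i-1}}_{\mathrm{RL},a}\bigl(y - T_{\lceil\alpha_{i-1}\rceil-1}[y;a]\bigr)$, where $T_k$ denotes the Taylor polynomial of degree $k$ at $a$, reduces the question to the Riemann--Liouville semigroup property, which can be established via repeated integration by parts under the smoothness $y\in C^{\lceil\alpha_n\rceil}[a,b]$; translating back to Caputo form then completes the identity. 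The remaining steps in both directions are essentially bookkeeping once this composition is available.
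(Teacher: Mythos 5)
The paper does not actually prove Theorem \ref{Transform}: it is quoted from \cite{diethelm2010analysis} (Diethelm's Theorem 8.1) with no in-paper argument, so the comparison below is against the standard proof in that source. Your overall strategy --- forward direction via a Caputo composition identity reduced to the Riemann--Liouville semigroup property, converse via chaining the Volterra integral forms of the system --- is exactly the strategy of the cited proof, so the route itself is the standard one.

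There is, however, a genuine gap at precisely the step you flag as the main obstacle. The identity $D^{\beta_i}_a D^{\alpha_{i-1}}_a y = D^{\alpha_i}_a y$ is \emph{not} valid under the hypotheses as you state them ($0<\alpha_1<1$, $\alpha_i-\alpha_{i-1}\le 1$, $y\in C^{\lceil\alpha_n\rceil}[a,b]$). Take $\alpha_{i-1}=1/2$, $\alpha_i=3/2$, $\beta_i=1$: then $D^{1}_a D^{1/2}_a y = \tfrac{d}{dt} J^{1/2}_a y' = J^{1/2}_a y'' + \tfrac{y'(a)}{\Gamma(1/2)}(t-a)^{-1/2} = D^{3/2}_a y + \tfrac{y'(a)}{\Gamma(1/2)}(t-a)^{-1/2}$, so the identity fails whenever $y'(a)\neq 0$. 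In general, whenever the pair $(\alpha_{i-1},\alpha_i)$ strictly straddles an integer $l$, the composition acquires the extra term $y^{(l)}(a)\,(t-a)^{l-\alpha_i}/\Gamma(l+1-\alpha_i)$. The correct sufficient condition (Diethelm's Lemma 3.13, which your ``repeated integration by parts'' would in fact reproduce, boundary term included) is that $\alpha_{i-1}$ and $\alpha_i$ lie in a common interval $[\ell-1,\ell]$ --- equivalently, that every integer in $(0,\alpha_n)$ itself occurs among the $\alpha_i$. The same omission breaks your converse direction: if some integer $l\le\lfloor\alpha_n\rfloor$ equals no $\alpha_{i-1}$, then none of the initial conditions \eqref{eq:3.2} carries the datum $y^{(l)}_a$, so the chained Volterra equation cannot assemble the full Taylor polynomial $\sum_j y^{(j)}_a (t-a)^j/j!$ and the two problems are simply not equivalent. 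Your proof goes through once this extra hypothesis is added and used --- and it is satisfied everywhere the paper invokes the theorem, since the integer $1$ is always inserted between $\alpha_1$ and $\alpha_2$ (giving $\beta_1=\alpha_1$, $\beta_2=1-\alpha_1$, $\beta_3=\alpha_2-1$) --- but as written the key identity is asserted in a generality in which it is false.
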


	In this paper, we propose new schemes to deal with FNBVPs and the algorithms are summarized as follows:
	\begin{enumerate}
		\item In case of that $0 < \alpha_1 <1$, we transform the FNBVP \eqref{eq:1} with $a=0$ into a system of FIVPs using Theorem \ref{Transform}. \\
		In case of that $\alpha_1 = 1$, i.e. the FNBVP \eqref{eq:1} has a single term of fractional order $\alpha_2$, we substitute the integer order $\alpha_1 = 1$ with the fractional order $\alpha_1 = 1-\varepsilon, \ \varepsilon \rightarrow 0+$ so that the FNBVP satisfies the assumption,  $0 < \alpha_1 < 1$ in Theorem \ref{Transform}. First Gronwall inequality for two-term equations in \cite{diethelm2010analysis} guarantees that the difference between the solution of FNBVP with $\alpha_1=1$ and with $\alpha_1 = 1-\varepsilon$ approaches $0$ as $\varepsilon \rightarrow 0+$. The FNBVP with $\alpha_1 = 1- \varepsilon$ is transformed into a system of FIVPs and then we reduce the number of equations in the system. We prove the reduced system is equivalent to the original system as $\varepsilon \rightarrow 0+$ in Section \ref{sec2}.
		\item To deal with FIVPs, we adopt high-order predictor-corrector methods (HPCMs) with linear interpolation and quadratic interpolation \cite{nguyen2017high} into Volterra integral equations which are equivalent to FIVPs. 
		\item ICs of the FIVPs in the system equivalent to \eqref{eq:1} are obtained by RBC at $t=0$. But ICs include $s:=y'(0)$ and since $s$ is unknown, we approximate $s$ by means of nonlinear shooting techniques based on Newton's method and Halley's method. The error function $|a_2y(b, s) + b_2y'(b, s) - \gamma_2|$ is used to construct the root-finding problem in order to make the approximate solution to $y(t)$ satisfying the RBC at $t=b$.
		\item The algorithm of the proposed shooting technique is as follows: The system of FIVPs is solved with an initial approximation to $s$, $s_k$ at the $k$th iteration. Using the approximate solution to the system obtained by HPCMs with $s_0$, we find $s_1$ by solving Newton's (Halley's) formula. We update the approximate solution to the system with $s_1$ and measure the norm of the error function. We repeat this process until the norm of the error function is within a tolerance.
	\end{enumerate}
	 Similar to our proposed schemes, authors in papers \cite{Huang3,Huang2} introduced numerical methods for solving FBVPs with RBCs. In papers \cite{Huang3, Huang2}, the FBVP with RBCs is turned into the FIVP by using a shooting method with a guess for the unknown IC $y(0)$ and then the FIVP is transformed into the Volterra integral equation. The integral-differential term in the Volterra integral equation is approximated by an integral discretization scheme with constant and first order interpolating polynomials in paper \cite{Huang3} and \cite{Huang2}, respectively. However, the integral discretization schemes can only handle linear FBVPs and the rate of convergence depends on the fractional order $\alpha$. This is elaborately addressed in Chapter \ref{sec4}.
	 The main advantages of our proposed schemes are as follows:
	 \begin{enumerate}
	 	\item The proposed schemes can handle both linear and nonlinear FBVPs with general RBCs. 
	 	\item Our proposed schemes can deal with multi-term FBVPs where $0<\alpha_1\le 1$ and $1<\alpha_2<2$. 
	 	\item 
	 	Our proposed methods with HPCMs have uniform convergence rates $\mathcal{O}(h^2)$ and $\mathcal{O}(h^3)$ for shooting techniques based on Newton's method and Halley's method respectively, with enough iterations, regardless of fractional orders thanks to global error estimates of HPCMs in \cite{nguyen2017high}.
	 	\item A matrix equation established iteratively is not involved as Newton's method and Halley's method are applied into a system of FIVPs.
	 \end{enumerate}

	This article is organized as follows. In Section \ref{sec2}, we describe an idea about the transformation of FNBVP with RBCs \eqref{eq:1} into a system of FIVPs. In Section \ref{sec3}, we describe nonlinear shooting methods based on Newton's method and Halley's method, to approximate unknown IC $s:=y(0)$ of FIVPs in the system. Also, we briefly mention how to apply the HPCMs into a system of FIVPs in Section \ref{sec3}. In Section \ref{sec4}, We demonstrate numerical examples verifying that proposed shooting techniques combined with HPCMs guarantee the global convergence rates of HPCMs. We also confirm the performance and effectiveness of the proposed methods by comparing with the modified integral discretization scheme in the paper \cite{Huang2}. A conclusion will be given in Section \ref{sec5}.
	Finally tables of numerical results and the linear explicit method which is an alternative method for solving FIVPs are described in the Appendix.

	\section{Transformation of FNBVP into system of FIVPs}\label{sec2}
	In this section, we describe how to transform the FNBVP with RBCs \eqref{eq:1} into a system of FIVPs according to the value of $\alpha_1$. Basically, we apply Theorem \ref{Transform} with $\beta_1:=\alpha_1, \ \beta_2:=1-\alpha_1, \ \beta_3:=\alpha_2-1$ to the FNBVP in case of that $0<\alpha_1<1$. If $\alpha_1$ is equal to 1, then we replace $\alpha_1$ with $1-\epsilon, \ \epsilon \rightarrow 0+$ and set $\beta_1:=1-\epsilon, \ \beta_2:=\epsilon, \ \beta_3:=\alpha_2 - 1$. We reduce the size of system using the fact $\beta_2 \rightarrow 0+$.\\
	
	


	\noindent Case 1: $0<\alpha_1<1$\\
	First, we consider a FNBVP with Dirichlet boundary conditions as follows:
	\begin{equation}\label{eq:4}
		\begin{cases}
			D^{\alpha_2}_0y(t) = f(t,y(t),D^{\alpha_1}_0y(t)), \ t\in[0,b] \\
			y(0) = y_{0}, \ y(T) = y_b.
		\end{cases}	
	\end{equation}
	 Applying Theorem \ref{Transform} with $\beta_1:=\alpha_1, \ \beta_2:=1-\alpha_1, \ \beta_3:=\alpha_2-1$,  FNBVP with Dirichlet boundary conditions \eqref{eq:4} can be transformed as follows:
	\begin{equation}\label{eq:5}
		\begin{cases}
			D^{\alpha_1}_0y(t) = w(t), & ~y(0) = y_0,\,y(b) = y_b,\\
			D^{1-\alpha_1}_0w(t) = z(t), & ~w(0) = 0,\\
			D^{\alpha_2-1}_0z(t) = f(t,y(t),w(t)), & ~z(0) = y^{(1)}(0).
		\end{cases}
	\end{equation}
	From the system of fractional differential equations \eqref{eq:5}, we obtain the following system of FIVPs:
	\begin{equation}\label{T_DIVP}
		\begin{cases}
			D^{\alpha_1}_0y(t) = w(t), & y(0) = y_0,\\
			D^{1-\alpha_1}_0w(t) = z(t), & w(0) = 0,\\
			D^{\alpha_2-1}_0z(t) = f(t,y(t),w(t)), & z(0) = s,
		\end{cases}
	\end{equation}
	where the IC $s$ is unknown and so needs to be approximated.\\
	Similar to the case of Dirichlet boundary conditions, FNBVP with RBCs \eqref{eq:1} can be written as follows:
	\begin{equation}\label{T_RIVP}
		\begin{cases}
			D^{\alpha_1}_0y(t) = w(t), & y(0) = y_0 = \frac{\gamma_1-b_1s}{a_1}\\
			D^{1-\alpha_1}_0w(t) = z(t), & w(0) = 0\\
			D^{\alpha_2-1}_0z(t) = f(t,y(t),w(t)), & z(0) = s.
		\end{cases}
	\end{equation}
	
	\noindent Case 2: $\alpha_1 = 1$ \\
	We consider the following FBVP with Dirichlet boundary conditions:
	\begin{equation}\label{FDE_ODE}
		\begin{cases}
			D^{\alpha_2}_0y(t) = f(t,y(t),y'(t)),~~t\in[0,b],\\
			y(0)=y_0, \ y(b)=y_b,
		\end{cases}
	\end{equation}
	where  $1 < \alpha_2 < 2,~\alpha_2 \in \R$.
	Since the fractional differential equation in \eqref{FDE_ODE} does not satisfy the assumption, $0<\alpha_1<1$ in Theorem \ref{Transform}, 
	we cannot apply the strategy used in \eqref{eq:4} to \eqref{FDE_ODE}.
	So we modify the equation in \eqref{FDE_ODE} to meet the assumption, with the same boundary conditions as in \eqref{FDE_ODE} as follows:
	\begin{equation}\label{FDE_ODE-ver1}
		\begin{cases}
			D^{\alpha_2}_0y(t) = f(t,y(t), D^{1-\varepsilon}_0y(t)), ~~t\in[0,b],\\
			y(0)=y_0, \ y(b)=y_b,
		\end{cases}
	\end{equation}
	where  $\alpha_2 \in (1,2), ~\epsilon \rightarrow 0+$. By Lemma \ref{Lemma_of_Gronwall_ineq}, solutions of two FBVPs \eqref{FDE_ODE} and \eqref{FDE_ODE-ver1} are approximately equal and the absolute error depends on $\epsilon$.
	
	
	
	\begin{lemma}\textnormal{(First Gronwall inequality for two-term equations in \cite{diethelm2010analysis}).}\label{Lemma_of_Gronwall_ineq}
		Let $\alpha_2 > 0$ and $\alpha_1,\tilde{\alpha}_1 \in (0,\alpha_2)$ be chosen so that the equation
		\begin{equation*}
			D^{\alpha_2}_{0}y(t) = f(t,y(t),D^{\alpha_1}_{0}y(t)),
		\end{equation*}
		subject to the initial conditions
		\begin{equation*}
			y(0) = y_0,y^{(1)}(0)=y^{1}_0,\cdots,y^{(\lceil \alpha_2 \rceil-1)}(0)=y^{\lceil \alpha_2 \rceil-1}_0
		\end{equation*}
		and
		\begin{equation*}
			D^{\alpha_2}_{0}z(t) = f(t,z(t),D^{\tilde{\alpha}_1}_0\tilde{z}(t))
		\end{equation*}
		subject to the same initial conditions
		\begin{equation*}
			z(0) = y_0,z^{(1)}(0)=y^{1}_0,\cdots,z^{(\lceil \alpha_2 \rceil-1)}(0)=y^{\lceil \alpha_2 \rceil-1}_0
		\end{equation*}
		(where $f$ satisfies a Lipschitz condition in its second and third arguments on a suitable domain) have unique continuous solutions $y,z:[0,T]\rightarrow\R$. We assume further that $\lfloor\alpha_1\rfloor = \lfloor\tilde{\alpha_1}\rfloor$. Then there exist constants $K_1$ and $K_2$ such that
		\begin{equation*}
			\vert y(t)-z(t) \vert \leq K_1\vert \alpha_1 -\tilde{\alpha}_1 \vert E_{\alpha_n}(K_2T^{\alpha_2}),~~^{\forall} t \in [0,T],
		\end{equation*}
		where $E_{\alpha_n}$ denotes the Mittag-Leffler function of order $\alpha_n$.
	\end{lemma}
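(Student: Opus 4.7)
The plan is to pass from the two differential equations to equivalent Volterra integral equations and then exploit the joint Lipschitz assumption on $f$ together with a regularity estimate in the fractional order. Since $y$ and $z$ satisfy the same initial conditions and $1 < \alpha_2$ does not change, applying $J^{\alpha_2}_0$ to both Caputo equations yields
\begin{equation*}
y(t) = T(t) + J^{\alpha_2}_0 f(\cdot,y(\cdot),D^{\alpha_1}_0 y(\cdot))(t), \qquad z(t) = T(t) + J^{\alpha_2}_0 f(\cdot,z(\cdot),D^{\tilde{\alpha}_1}_0 z(\cdot))(t),
\end{equation*}
with the \emph{same} Taylor polynomial $T(t)$ arising from the shared initial data. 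Subtracting and using the Lipschitz bound with constant $L$ on the second and third arguments of $f$ gives
\begin{equation*}
|y(t)-z(t)| \le \frac{L}{\Gamma(\alpha_2)}\int_0^t (t-\tau)^{\alpha_2-1}\bigl(|y(\tau)-z(\tau)| + |D^{\alpha_1}_0 y(\tau)-D^{\tilde{\alpha}_1}_0 z(\tau)|\bigr)\,d\tau.
\end{equation*}

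The next step is to split the fractional derivative difference using the triangle inequality,
\begin{equation*}
|D^{\alpha_1}_0 y - D^{\tilde{\alpha}_1}_0 z| \le |D^{\alpha_1}_0 y - D^{\tilde{\alpha}_1}_0 y| + |D^{\tilde{\alpha}_1}_0 y - D^{\tilde{\alpha}_1}_0 z|.
\end{equation*}
The second term is controlled by $|D^{\tilde{\alpha}_1}_0(y-z)|$, which by boundedness of the Riemann--Liouville integral on $C^{\lceil\alpha_2\rceil}[0,T]$ can be absorbed into the Gronwall-type estimate for $|y-z|$ after a further fractional integration. The first term is the crux of the argument: one must show continuity of the Caputo derivative with respect to its order, in the form
\begin{equation*}
\sup_{t\in[0,T]} |D^{\alpha_1}_0 y(t) - D^{\tilde{\alpha}_1}_0 y(t)| \le C \, |\alpha_1 - \tilde{\alpha}_1|,
\end{equation*}
where $C$ depends on $T$ and on $\|y^{(\lceil\alpha_1\rceil)}\|_\infty$. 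This is where the assumption $\lfloor\alpha_1\rfloor = \lfloor\tilde{\alpha}_1\rfloor$ is essential, since it guarantees that both Caputo derivatives involve the same integer-order derivative of $y$, so that the only quantity changing with the order is the integral kernel $(t-\tau)^{\lfloor\alpha_1\rfloor-\alpha_1}/\Gamma(\lceil\alpha_1\rceil-\alpha_1)$. Estimating by the mean value theorem in $\alpha$ applied to this kernel, combined with standard bounds for the digamma function appearing in $\partial_\alpha \Gamma$, yields the desired linear bound.

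I expect this regularity-in-order estimate to be the main technical obstacle: one must handle the logarithmic singularity $\log(t-\tau)$ that appears when differentiating $(t-\tau)^{\lceil\alpha_1\rceil-\alpha_1}$ with respect to $\alpha_1$ and verify that it remains integrable uniformly on $[0,T]$. Once this is established, substituting back produces an inequality of the form
\begin{equation*}
|y(t)-z(t)| \le A\,|\alpha_1 - \tilde{\alpha}_1|\,\frac{t^{\alpha_2}}{\Gamma(\alpha_2+1)} + B \int_0^t (t-\tau)^{\alpha_2-1}|y(\tau)-z(\tau)|\,d\tau
\end{equation*}
for suitable constants $A, B$. The final step is to apply a generalized (fractional) Gronwall inequality, whose standard form produces a Mittag--Leffler function: iterating the inequality and summing the resulting series gives the bound
\begin{equation*}
|y(t) - z(t)| \le K_1 \, |\alpha_1 - \tilde{\alpha}_1| \, E_{\alpha_2}(K_2 T^{\alpha_2}),
\end{equation*}
with $K_1 = A/\Gamma(\alpha_2+1)$ and $K_2$ built from $B$, which is the claimed estimate with $\alpha_n = \alpha_2$.
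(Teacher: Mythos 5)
First, a point of reference: the paper does not prove this lemma at all --- it is imported verbatim from \cite{diethelm2010analysis} as a known result, so the only in-paper benchmark for your argument is the closely analogous proof of Theorem 2.4 (\ref{epsapprox}), which handles the same kind of perturbation-of-order estimate at the level of the reduced system. Your overall strategy (pass to Volterra form with a common Taylor polynomial, use the Lipschitz condition, split $D^{\alpha_1}_0y - D^{\tilde{\alpha}_1}_0z$ by the triangle inequality, bound the order-perturbation term by $C\vert\alpha_1-\tilde{\alpha}_1\vert$ via differentiability of the kernel in the order, and finish with a fractional Gronwall lemma) is exactly the standard route, and your identification of where $\lfloor\alpha_1\rfloor=\lfloor\tilde{\alpha}_1\rfloor$ enters is correct.

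The genuine gap is the step you dismiss in one sentence: absorbing $\vert D^{\tilde{\alpha}_1}_0(y-z)\vert$ into a Gronwall inequality for $\vert y-z\vert$ alone. A Caputo derivative of the difference cannot be bounded pointwise by the difference itself (differentiation is not bounded on $C^0$), so your final displayed inequality, in which only $\vert y(\tau)-z(\tau)\vert$ appears under the integral, does not follow from the preceding lines. The correct repair is to derive a \emph{second} integral inequality for $D^{\tilde{\alpha}_1}_0(y-z)$ by applying $D^{\tilde{\alpha}_1}_0$ to the Volterra representation, which gives $D^{\tilde{\alpha}_1}_0(y-z)=J^{\alpha_2-\tilde{\alpha}_1}_0\bigl[f(\cdot,y,D^{\alpha_1}_0y)-f(\cdot,z,D^{\tilde{\alpha}_1}_0z)\bigr]$, and then to run the Gronwall argument on the combined quantity $\delta(t)=\vert y(t)-z(t)\vert+\vert D^{\tilde{\alpha}_1}_0(y-z)(t)\vert$. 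Because the two fractional integrals $J^{\alpha_2}_0$ and $J^{\alpha_2-\tilde{\alpha}_1}_0$ have different orders, one must first reduce them to a common lowest order (this is precisely the role of Lemma \ref{order_com_lemma} in the paper's proof of Theorem \ref{epsapprox}) before Lemma \ref{lem.2} applies; the Mittag--Leffler index that naturally emerges is then $\alpha_2-\tilde{\alpha}_1$ rather than $\alpha_2$. By contrast, the order-continuity estimate $\sup_t\vert D^{\alpha_1}_0y(t)-D^{\tilde{\alpha}_1}_0y(t)\vert\le C\vert\alpha_1-\tilde{\alpha}_1\vert$, which you flag as the main obstacle, is comparatively routine once $y^{(\lceil\alpha_1\rceil)}$ is known to be continuous: the logarithmic factor from differentiating the kernel in the order is integrable against $(t-\tau)^{\lceil\alpha_1\rceil-\alpha_1-1}$ uniformly on $[0,T]$. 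So the proposal has the right skeleton but misplaces the difficulty and, as written, the Gronwall step would fail.
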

	
	Applying Theorem \ref{Transform} to \eqref{FDE_ODE-ver1}, we obtain the following system of FIVPs:
	\begin{equation}\label{FDE_ODE-ver2}
		\begin{cases}
			D^{1-\epsilon}_0y(t) = w(t), & y(0) = y_0,\\
			D^{\epsilon}_0w(t) = z(t), & w(0) = 0,\\
			D^{\alpha_2-1}_0z(t) = f(t,y(t),w(t)), & z(0)  = s.
		\end{cases}
	\end{equation}
	Now, we show that the system of FIVPs \eqref{FDE_ODE-ver2} is equivalent to the following system as $\epsilon \rightarrow 0$ using Lemmas \ref{lem.1} through \ref{order_com_lemma} and Theorem \ref{epsapprox}.
	\begin{equation}\label{FDE_ODE-ver3}
		\begin{cases}
			D^{1-\epsilon}_0\tilde{y}(t) = \tilde{z}(t), & \tilde{y}(0) = y_0,\\
			D^{\alpha_2-1}_0\tilde{z}(t) = f(t,\tilde{y}(t),\tilde{z}(t)). & \tilde{z}(0) = s.
		\end{cases}
	\end{equation}
	\begin{lemma}(Theorem 2.10 in \cite{diethelm2010analysis})\label{lem.1}
		Let $f\in C[a,b]$ and $m\geq 0$. Moreover assume that $\alpha_k$ is a sequence of positive numbers such that $\alpha_k \rightarrow \alpha$ as $k \rightarrow \infty$. Then, for every $\delta>0$,
		\begin{equation*}
		\lim_{k\rightarrow\infty}\sup_{t\in[a+\delta,b]}\vert J^{\alpha_k}_af(t) - J^{\alpha}_af(t) \vert = 0.
		\end{equation*}
	\end{lemma}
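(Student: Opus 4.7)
The plan is to reduce the claim to a continuity-at-the-identity statement for the Riemann--Liouville integral operator via the semigroup property $J^{\beta}_a J^{\gamma}_a = J^{\beta+\gamma}_a$, and then to prove that auxiliary statement by splitting the convolution integral near its diagonal singularity. By passing to subsequences I may split into two cases: either $\alpha_k \ge \alpha$ for all large $k$, or $\alpha_k \le \alpha$ for all large $k$.

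In the first case, put $\beta_k := \alpha_k - \alpha \to 0^+$ and $g := J^{\alpha}_a f$, noting $g \in C[a,b]$ since $f \in C[a,b]$. By the semigroup identity $J^{\alpha_k}_a f = J^{\beta_k}_a g$, it suffices to prove the following auxiliary fact: for every $g \in C[a,b]$ and $\delta > 0$,
$$ \sup_{t \in [a+\delta,\, b]} \bigl| J^{\beta}_a g(t) - g(t) \bigr| \longrightarrow 0 \quad \text{as } \beta \to 0^+. $$
For this, I would decompose
$$ J^{\beta}_a g(t) - g(t) = \frac{1}{\Gamma(\beta)} \int_a^t (t-\tau)^{\beta-1} \bigl[g(\tau) - g(t)\bigr]\, d\tau + \left( \frac{(t-a)^{\beta}}{\Gamma(\beta+1)} - 1 \right) g(t). $$
The second term tends to $0$ uniformly on $[a+\delta,b]$ because $(t-a)^{\beta} \to 1$ uniformly there (using $t-a \ge \delta > 0$) and $\Gamma(\beta+1) \to 1$. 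For the first term, given $\eta > 0$ pick $\sigma \in (0,\delta]$ from the uniform continuity of $g$ on $[a,b]$ so that $|g(\tau)-g(t)| < \eta$ whenever $|\tau - t| < \sigma$, and split the integral at $\tau = t - \sigma$ (which is $\ge a$ since $t \ge a+\delta \ge a+\sigma$). The near-diagonal piece is bounded by $\eta\, \sigma^{\beta}/\Gamma(\beta+1) \to \eta$, while the bulk piece is bounded by $2\|g\|_{\infty}\, \sigma^{\beta-1}(b-a)/\Gamma(\beta)$, which tends to $0$ because $1/\Gamma(\beta) \to 0$ as $\beta \to 0^+$. Letting $\eta \to 0$ proves the auxiliary claim, hence the first case.

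The second case, $\alpha_k \le \alpha$, is handled symmetrically: write $J^{\alpha}_a f = J^{\alpha - \alpha_k}_a (J^{\alpha_k}_a f)$, observe that $\|J^{\alpha_k}_a f\|_{\infty}$ is uniformly bounded for $\alpha_k$ in a neighborhood of $\alpha$ (since $\|J^{\gamma}_a f\|_{\infty} \le \|f\|_{\infty}(b-a)^{\gamma}/\Gamma(\gamma+1)$ varies continuously in $\gamma$), and apply the same near-diagonal split to the difference. I expect the main obstacle to be the uniform control of the near-diagonal integral: the kernel $(t-\tau)^{\beta-1}$ is integrable but unbounded as $\tau \to t$, and one must trade its singularity against the smallness of $g(\tau)-g(t)$ in a manner uniform in $t$. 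The restriction to $[a+\delta,b]$ is essential precisely because $(t-a)^{\beta}$ does \emph{not} tend to $1$ uniformly near $t = a$ (indeed, at $t=a$ the factor equals $0$ for every $\beta>0$), which is exactly what forces the hypothesis $\delta > 0$.
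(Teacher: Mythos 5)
The paper does not prove this lemma at all --- it is quoted verbatim as Theorem 2.10 of Diethelm's book --- so there is no in-paper argument to compare against and your proof must stand on its own. Your Case 1 does stand: the reduction via the semigroup identity $J^{\alpha_k}_a f = J^{\alpha_k-\alpha}_a\bigl(J^{\alpha}_a f\bigr)$ is legitimate, the decomposition of $J^{\beta}_a g - g$ is algebraically correct (the mass of the kernel is indeed $(t-a)^{\beta}/\Gamma(\beta+1)$), the bulk piece dies because $1/\Gamma(\beta)\to 0$, the near-diagonal piece is at most $\eta\,\sigma^{\beta}/\Gamma(\beta+1)\to\eta$, and you correctly locate where $\delta>0$ enters (uniformity of $(t-a)^{\beta}\to 1$ away from $t=a$).

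Case 2, however, has a gap as written. There you must estimate $J^{\beta_k}_a g_k - g_k$ with $g_k := J^{\alpha_k}_a f$ \emph{varying with} $k$, and the near-diagonal piece is controlled by $\sup_{|\tau-t|<\sigma}|g_k(\tau)-g_k(t)|$, not by $\Vert g_k\Vert_{\infty}$. Uniform boundedness --- the only property you verify --- helps only with the bulk piece; with boundedness alone the near-diagonal piece is bounded by $2\sup_k\Vert g_k\Vert_{\infty}\,\sigma^{\beta_k}/\Gamma(\beta_k+1)$, which does not tend to $0$. What you actually need is uniform equicontinuity of the family $\{J^{\alpha_k}_a f\}_k$, so that a single $\sigma$ serves all $k$. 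This is true and easy to supply: for $0<\gamma\le 1$ one has $|J^{\gamma}_a f(t+h)-J^{\gamma}_a f(t)|\le 2\Vert f\Vert_{\infty}h^{\gamma}/\Gamma(\gamma+1)$, and in Case 2 eventually $\alpha_k\ge\alpha/2>0$, so the Hölder exponents are bounded away from $0$ and the family is uniformly Hölder (the case $\gamma>1$ is even easier). So the gap is repairable, but ``observe that $\Vert J^{\alpha_k}_a f\Vert_{\infty}$ is uniformly bounded and apply the same split'' is not yet a proof of Case 2. Alternatively, when $\alpha>0$ you can bypass the semigroup reduction in Case 2 entirely by showing $\int_0^{b-a}\bigl|u^{\alpha_k-1}/\Gamma(\alpha_k)-u^{\alpha-1}/\Gamma(\alpha)\bigr|\,du\to 0$ via dominated convergence, which yields uniform convergence on all of $[a,b]$; the restriction to $[a+\delta,b]$ is genuinely needed only when $\alpha=0$.
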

	\begin{lemma}(Lemma 6.19 in \cite{diethelm2010analysis})\label{lem.2}
		Let $\alpha, \ T, \ \phi_1, \phi_2\in\mathbb{R}^{+}$. Moreover assume that $\delta \ : \ [0,T]\rightarrow\mathbb{R}$ is a continuous function satisfying the inequality
		\begin{equation*}
			\vert\delta(t)\vert\le\phi_1 + \frac{\phi_2}{\Gamma(\alpha)}\int^t_0(t-\tau)^{\alpha-1}\vert\delta(\tau)\vert d\tau,~~ \forall t\in[0,T].
		\end{equation*}
		Then 
		\begin{equation*}
			\vert\delta(t)\vert\le\phi_1 E_{\alpha}(\phi_2 t^{\alpha}), ~~ \forall t\in[0,T].
		\end{equation*} 
	\end{lemma}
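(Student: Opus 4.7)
The plan is to prove the estimate by iterating the hypothesis, exploiting monotonicity of the Riemann--Liouville operator $J^\alpha_0$ on non-negative functions together with its semigroup property $J^\alpha_0 \circ J^\beta_0 = J^{\alpha+\beta}_0$. Writing the assumption in compact form as $|\delta(t)| \le \phi_1 + \phi_2 J^\alpha_0 |\delta|(t)$, I would first record the identity $J^{k\alpha}_0 1 = t^{k\alpha}/\Gamma(k\alpha+1)$, which follows from a direct Beta-function computation, and then prove by induction on $n$ that
\begin{equation*}
|\delta(t)| \le \phi_1 \sum_{k=0}^{n-1} \frac{(\phi_2 t^\alpha)^k}{\Gamma(k\alpha+1)} + \phi_2^n \, J^{n\alpha}_0 |\delta|(t), \qquad t \in [0,T].
\end{equation*}
The induction step substitutes the hypothesis into the remainder term and uses the semigroup property to collapse $J^\alpha_0 J^{n\alpha}_0$ into $J^{(n+1)\alpha}_0$; monotonicity of $J^\alpha_0$ (its kernel $(t-\tau)^{\alpha-1}$ is non-negative) is what allows the substitution inside the integral to preserve the pointwise inequality.

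Next I would control the remainder. Since $\delta$ is continuous on the compact interval $[0,T]$, $M := \max_{t\in[0,T]} |\delta(t)|$ is finite, and therefore
\begin{equation*}
\phi_2^n \, J^{n\alpha}_0 |\delta|(t) \le M \frac{(\phi_2 T^\alpha)^n}{\Gamma(n\alpha+1)} \longrightarrow 0 \quad \text{as } n \to \infty,
\end{equation*}
the convergence being immediate from the asymptotics of $\Gamma(n\alpha+1)$, or equivalently from the fact that the Mittag-Leffler function is entire so the tail of its series vanishes uniformly on $[0,T]$. Passing to the limit $n \to \infty$ in the displayed inequality and recognising
\begin{equation*}
\sum_{k=0}^{\infty} \frac{(\phi_2 t^\alpha)^k}{\Gamma(k\alpha+1)} = E_{\alpha}(\phi_2 t^\alpha)
\end{equation*}
as the definition of the Mittag-Leffler function would yield the claim.

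The main obstacle I expect is the clean bookkeeping of the iteration: one must verify that repeated substitution of the hypothesis into itself produces exactly the partial sum of the Mittag-Leffler series plus a single iterated fractional integral, rather than a more complicated nested expression. The semigroup property of $J^\alpha_0$ is the essential tool that makes this collapse work at each step, while continuity of $\delta$ (needed for the uniform bound $M$) ensures the remainder vanishes uniformly on $[0,T]$. Once these two ingredients are combined with the standard Beta-integral identity, the remainder of the argument is a routine induction followed by term-by-term passage to the limit.
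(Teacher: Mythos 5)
Your proposal is correct: the iteration $|\delta(t)|\le\phi_1\sum_{k=0}^{n-1}(\phi_2 t^{\alpha})^k/\Gamma(k\alpha+1)+\phi_2^{\,n}J^{n\alpha}_0|\delta|(t)$, justified by monotonicity and the semigroup property of $J^{\alpha}_0$, together with the uniform bound $M(\phi_2 T^{\alpha})^n/\Gamma(n\alpha+1)\to 0$ on the remainder, is a complete and standard proof of this fractional Gronwall inequality. The paper itself gives no proof --- it simply cites Lemma 6.19 of the Diethelm monograph --- and your argument is essentially the one used there, so there is nothing to reconcile.
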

	\begin{lemma}\label{order_com_lemma}
		Let $0<\gamma \leq \alpha \leq \beta $. Then for any $t\in[a,b]$,
		\begin{equation*}
			\vert J^{\alpha}_ay(t)-J^{\beta}_ay(t) \vert \leq \left[ \frac{\Gamma(\gamma)}{\Gamma(\alpha)}(b-a)^{\alpha-\gamma} + \frac{\Gamma(\gamma)}{\Gamma(\beta)}(b-a)^{\beta-\gamma} \right] J^{\gamma}_a\vert y(t)\vert.
		\end{equation*}
	\end{lemma}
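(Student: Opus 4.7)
The plan is to prove the bound by the triangle inequality applied termwise, bounding each of $|J^{\alpha}_a y(t)|$ and $|J^{\beta}_a y(t)|$ separately in terms of $J^{\gamma}_a|y(t)|$. The key observation is that since $\gamma \leq \alpha$, we can factor the Riemann--Liouville kernel as
\begin{equation*}
(t-\tau)^{\alpha-1} = (t-\tau)^{\alpha-\gamma}(t-\tau)^{\gamma-1},
\end{equation*}
and since $\alpha - \gamma \geq 0$ and $\tau\in[a,t]\subseteq[a,b]$, the first factor is bounded above by $(b-a)^{\alpha-\gamma}$. The analogous decomposition works for the kernel with exponent $\beta-1$.

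First I would start from the integral definition in Definition \ref{def-RL} and estimate
\begin{equation*}
|J^{\alpha}_a y(t)| \leq \frac{1}{\Gamma(\alpha)}\int_a^t (t-\tau)^{\alpha-1}|y(\tau)|\,d\tau \leq \frac{(b-a)^{\alpha-\gamma}}{\Gamma(\alpha)}\int_a^t (t-\tau)^{\gamma-1}|y(\tau)|\,d\tau,
\end{equation*}
and then rewrite the last integral as $\Gamma(\gamma)\,J^{\gamma}_a|y(t)|$ to conclude
\begin{equation*}
|J^{\alpha}_a y(t)| \leq \frac{\Gamma(\gamma)}{\Gamma(\alpha)}(b-a)^{\alpha-\gamma}\,J^{\gamma}_a|y(t)|.
\end{equation*}
Next I would repeat the identical argument with $\beta$ in place of $\alpha$, which is valid because $\gamma \leq \alpha \leq \beta$ ensures $\beta - \gamma \geq 0$ as well, yielding
\begin{equation*}
|J^{\beta}_a y(t)| \leq \frac{\Gamma(\gamma)}{\Gamma(\beta)}(b-a)^{\beta-\gamma}\,J^{\gamma}_a|y(t)|.
\end{equation*}

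Finally I would apply the elementary triangle inequality $|J^{\alpha}_a y(t) - J^{\beta}_a y(t)| \leq |J^{\alpha}_a y(t)| + |J^{\beta}_a y(t)|$ and substitute the two bounds just obtained to arrive at the stated inequality. There is no substantive obstacle here: the proof is three lines of routine estimation, and the only subtlety worth noting is that the hypothesis $\gamma \leq \alpha \leq \beta$ is used exactly to guarantee nonnegativity of the exponents $\alpha-\gamma$ and $\beta-\gamma$, which is what allows the pointwise kernel bound $(t-\tau)^{\alpha-\gamma} \leq (b-a)^{\alpha-\gamma}$ to be valid on $[a,t]$. The triangle inequality is admittedly crude (it does not exploit cancellation between $J^{\alpha}_a y$ and $J^{\beta}_a y$), but the looser form is exactly what the statement calls for and is sufficient for the subsequent application, presumably in the proof of Theorem \ref{epsapprox}, where one takes $\alpha = 1-\varepsilon$ and $\beta$ near $1$ (or similar) and lets $\varepsilon\to 0+$.
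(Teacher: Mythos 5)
Your proof is correct and is essentially identical to the paper's: both arguments factor the kernel as $(t-\tau)^{\gamma-1}(t-\tau)^{\alpha-\gamma}$, bound the second factor by $(b-a)^{\alpha-\gamma}$, apply the triangle inequality, and recognize the remaining integral as $\Gamma(\gamma)J^{\gamma}_a|y(t)|$. Nothing further is needed.
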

		\begin{proof}
		By Definition \ref{def-RL}, 
		\begin{equation*}
		\begin{aligned}
		J^{\alpha}_ay(t) + J^{\beta}_ay(t) &= \frac{1}{\Gamma(\alpha)}\int_a^t(t-\tau)^{\alpha-1}y(\tau)d\tau + \frac{1}{\Gamma(\beta)}\int_a^t(t-\tau)^{\beta-1}y(\tau)d\tau\\
		&= \frac{1}{\Gamma(\alpha)}\int_a^t(t-\tau)^{\gamma-1+(\alpha-\gamma)}y(\tau)d\tau + \frac{1}{\Gamma(\beta)}\int_a^t(t-\tau)^{\gamma-1+(\beta-\gamma)}y(\tau)ds.
		\end{aligned}
		\end{equation*}
		Then,
		\begin{equation*}
		\begin{aligned}
		\vert J^{\alpha}_ay(t)-J^{\beta}_ay(t) &\vert \leq \frac{(b-a)^{\alpha-\gamma}}{\Gamma(\alpha)}\int_a^t(t-\tau)^{\gamma-1}\vert y(\tau)\vert d\tau + \frac{(b-a)^{\beta-\gamma}}{\Gamma(\beta)}\int_a^t(t-\tau)^{\gamma - 1}\vert y(\tau) \vert d\tau\\
		&\leq \left[\frac{\Gamma(\gamma)}{\Gamma(\alpha)}(b-a)^{\alpha-\gamma}+\frac{\Gamma(\gamma)}{\Gamma(\beta)}(b-a)^{\beta-\gamma}\right]\frac{1}{\Gamma(\gamma)}\int_a^t(t-\tau)^{\gamma-1}\vert y(\tau) \vert d\tau\\
		&\leq  \left[\frac{\Gamma(\gamma)}{\Gamma(\alpha)}(b-a)^{\alpha-\gamma}+\frac{\Gamma(\gamma)}{\Gamma(\beta)}(b-a)^{\beta-\gamma}\right] J^{\gamma}_a\vert y(t) \vert.
		\end{aligned}
		\end{equation*}
	\end{proof}
	
	
	\begin{thm}\label{epsapprox}
		Let $1<\alpha_2<2$, $^{\forall} T\in\R^+$ and $f:[0,T]\times \R \times \R \rightarrow \R$ satisfies the  Lipsichiz condition in its second and third arguments on a suitable domain. Then we have the following inequality:
		\begin{equation*}
		\vert f(t,x_1,y_1)-f(t,x_2,y_2)\vert \leq L\left( \vert x_2-x_1 \vert + \vert y_2 - y_1 \vert \right), 
		\end{equation*}
		where $^{\forall} t \in [0,T]$, $x_1,x_2,y_1,y_2 : [0,T] \rightarrow \R$ and $0<L$. \\
		If for any $ 0 <\epsilon << 1$, $\hat{y}$ and $\tilde{y}$ are solutions of the following systems, respectively:
		\begin{equation*}
			\begin{cases}
				D^{1-\epsilon}_0\hat{y}(t) = w(t), & y(0) = y_0\\
				D^{\epsilon}_0w(t) = z(t), & w(0) = 0\\
				D^{\alpha_2-1}_0z(t) = f(t,\hat{y}(t),w(t)). & z(0) = s
			\end{cases}
			~~
			\text{and}
			~~
			\begin{cases}
				D^{1-\epsilon}_0\tilde{y}(t) = \tilde{z}(t), & \tilde{y}(0) = y_0\\
				D^{\alpha_2-1}_0\tilde{z}(t) = f(t,\tilde{y}(t),\tilde{z}(t)). & \tilde{z}(0) = s,
			\end{cases}
		\end{equation*}
		then,
		\begin{equation*}
			\vert \hat{y}(t)-\tilde{y}(t)\vert \rightarrow 0, ~~\text{as}~\epsilon \rightarrow 0.
		\end{equation*}
	\end{thm}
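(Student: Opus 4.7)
The plan is to rewrite both systems as equivalent Volterra integral equations, decompose the difference $\hat y-\tilde y$ into a ``kernel mismatch'' piece (which tends to $0$ as $\epsilon\to 0$ by Lemma \ref{lem.1}) plus a ``propagation'' piece (controlled by the Lipschitz constant of $f$), and then close the resulting estimate with the fractional Gronwall inequality of Lemma \ref{lem.2}.

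First, I would integrate each system. The key trick for the first system is the semigroup identity $J^{1-\epsilon}_0 J^{\epsilon}_0 = J^1_0$, which, combined with $w(0)=0$, collapses its first two equations to $\hat y(t) = y_0 + J^1_0 z(t)$, while $w(t)=J^\epsilon_0 z(t)$ and $z(t)=s+J^{\alpha_2-1}_0 f(t,\hat y(t),w(t))$. The second system simply becomes $\tilde y(t)=y_0+J^{1-\epsilon}_0\tilde z(t)$, $\tilde z(t)=s+J^{\alpha_2-1}_0 f(t,\tilde y(t),\tilde z(t))$. Subtracting, I would split
\[
\hat y(t)-\tilde y(t) \;=\; \bigl[J^1_0 z(t)-J^{1-\epsilon}_0 z(t)\bigr] \;+\; J^{1-\epsilon}_0\bigl(z(t)-\tilde z(t)\bigr),
\]
and similarly $|w-\tilde z|\le |J^\epsilon_0 z-z|+|z-\tilde z|$. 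By Lemma \ref{lem.1} applied with $\alpha_k=1-\epsilon\to 1$ and with $\alpha_k=\epsilon\to 0$, both $|J^1_0 z-J^{1-\epsilon}_0 z|$ and $|J^\epsilon_0 z-z|$ converge to $0$ as $\epsilon\to 0$ on $[\delta,T]$ for every $\delta>0$; boundedness of $z$ and the fact $t^{1-\epsilon}/\Gamma(2-\epsilon)\to t$ handle the strip $[0,\delta]$. Denote a uniform bound on both quantities by $\sigma(\epsilon)\to 0$.

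Next, I would apply the Lipschitz hypothesis to obtain
\[
|z-\tilde z| \;\le\; L\,J^{\alpha_2-1}_0\bigl(|\hat y-\tilde y|+|w-\tilde z|\bigr)
\;\le\; L\,J^{\alpha_2-1}_0\bigl(|\hat y-\tilde y|+|z-\tilde z|\bigr) + L\sigma(\epsilon)\,\tfrac{T^{\alpha_2-1}}{\Gamma(\alpha_2)},
\]
combined with $|\hat y-\tilde y|\le \sigma(\epsilon)+J^{1-\epsilon}_0|z-\tilde z|$. Setting $\Phi:=|\hat y-\tilde y|+|z-\tilde z|$ and using Lemma \ref{order_com_lemma} (with $\gamma=\alpha_2-1$, valid since $\alpha_2-1\le 1-\epsilon$ for small $\epsilon$) to dominate $J^{1-\epsilon}_0$ by a constant multiple of $J^{\alpha_2-1}_0$, I would obtain an inequality of the form
\[
\Phi(t)\;\le\; C_1\,\sigma(\epsilon) \;+\; \frac{C_2}{\Gamma(\alpha_2-1)}\int_0^t (t-\tau)^{\alpha_2-2}\Phi(\tau)\,d\tau,
\]
with $C_1,C_2$ independent of $\epsilon$ for $\epsilon$ small. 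Lemma \ref{lem.2} then yields $\Phi(t)\le C_1\sigma(\epsilon)\,E_{\alpha_2-1}(C_2 T^{\alpha_2-1})$, so $|\hat y(t)-\tilde y(t)|\le \Phi(t)\to 0$ as $\epsilon\to 0$, uniformly on $[0,T]$.

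The main obstacle will be the bookkeeping required to merge the two distinct fractional integrals $J^{1-\epsilon}_0$ and $J^{\alpha_2-1}_0$ into a single Gronwall-ready inequality whose constants remain bounded uniformly in small $\epsilon$; this is precisely where Lemma \ref{order_com_lemma} is essential. A secondary subtlety is that Lemma \ref{lem.1} only guarantees uniform convergence on $[\delta,T]$, so the boundary behaviour near $t=0$ of the kernel-mismatch terms must be treated separately by a direct computation using boundedness of $z$ on $[0,T]$.
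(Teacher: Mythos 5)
Your proposal is correct and follows essentially the same route as the paper's proof: Volterra reformulation, splitting the difference into a kernel-mismatch piece handled by Lemma \ref{lem.1} and a propagation piece handled by the Lipschitz condition, merging the different fractional orders via Lemma \ref{order_com_lemma}, and closing with the fractional Gronwall inequality of Lemma \ref{lem.2}. The only cosmetic differences are that the paper places the mismatch term on $\tilde z$ (as $\Vert J^{\epsilon}_0\tilde z - \tilde z\Vert_{\infty}$) rather than on $z$, and runs the Gronwall argument on $\vert z-\tilde z\vert$ alone instead of on your combined quantity $\Phi$; your explicit attention to the behaviour of the mismatch terms near $t=0$, where Lemma \ref{lem.1} gives only convergence on $[\delta,T]$, is in fact more careful than the paper's own treatment.
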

	
	\begin{proof}
		By Lemma 6.2 in \cite{diethelm2010analysis}, FIVPs $D^{1-\epsilon}_0\hat{y}(t) = w(t), \ y(0) = y_0, \text{ and } D^{1-\epsilon}_0\tilde{y}(t) = \tilde{z}(t), \ \tilde{y}(0) = y_0$ 
		are equivalent to Volterral integral equations of the second kind, respectively as follows:
		\begin{equation*}
		\hat{y}(t) = y_0+J^{1-\epsilon}_0w(t),~~\tilde{y}(t) = y_0+J^{1-\epsilon}_0\tilde{z}(t).
		\end{equation*}
		Then $\hat{y}(t) - \tilde{y}(t)$ can be expressed as Riemann-Liouville fractional integral of $w(t)-\tilde{z}(t)$ as follows:
		\begin{equation}\label{diff_y_y}
		\hat{y}(t) - \tilde{y}(t) = J^{1-\epsilon}_0(w(t)-\tilde{z}(t)).
		\end{equation}
		Since $w(t) = J^{\epsilon}z(t)$ by Lemma 6.2 in \cite{diethelm2010analysis} and rewriting $w(t) - \tilde{z}(t)$ as $w(t)-J^{\epsilon}_0\tilde{z}(t)+J^{\epsilon}_0\tilde{z}(t)-\tilde{z}(t)$, we obtain the following inequality:
		\begin{equation}\label{diff_w_z}
		\vert w(t) - \tilde{z}(t) \vert \leq  J^{\epsilon}_0\vert z(t)-\tilde{z}(t) \vert + \vert J^{\epsilon}_0\tilde{z}(t)-\tilde{z}(t) \vert.
		\end{equation}
		Since $D^{\alpha_2-1}_0z(t) = f(t,\hat{y}(t),w(t))$ and $D^{\alpha_2-1}_0\tilde{z}(t) = f(t,\tilde{y}(t),\tilde{z}(t))$ are equivalent to Volterra integral equations $z(t) = s+ J^{\alpha_2-1}_0f(t,\hat{y}(t),w(t))$ and $\tilde{z}(t) = s + J^{\alpha_2-1}_0f(t,\tilde{y}(t),w(t))$ by Lemma 6.2 in \cite{diethelm2010analysis} respectively, using Lipschitz condition, \eqref{diff_y_y}, and \eqref{diff_w_z}, we obtain the following inequalities:
		\begin{eqnarray}\label{lem2.4-pf-ineq1}
		\vert z(t)-\tilde{z}(t) \vert &=& \vert J^{\alpha_2-1}_0\left[f(t,\hat{y}(t),w(t))-f(t,\tilde{y}(t),\tilde{z}(t))\right] \vert \nonumber \\
		&\leq& J^{\alpha_2-1}_0\vert f(t,\hat{y}(t),w(t)) - f(t,\tilde{y}(t),\tilde{z}(t)) \vert \nonumber \\
		&\leq& LJ^{\alpha_2-1}_0 \left( \vert \hat{y}(t)-\tilde{y}(t) \vert + \vert w(t) - \tilde{z}(t) \vert \right) \hspace{2.35cm}\nonumber \\
		&\leq& L J^{\alpha_2-1}_0\left[J^{1-\epsilon}_0\vert w(t)-\tilde{z}(t) \vert +  J^{\epsilon}_0\vert z(t)-\tilde{z}(t) \vert + \vert J^{\epsilon}_0\tilde{z}(t)-\tilde{z}(t)\vert\right].
		\end{eqnarray}
		Since $1<\alpha_2 <2$ and $\Gamma(\alpha_2) = (\alpha_2 - 1)\Gamma(\alpha_2 - 1)$, we have the following inequalities about $J^{\alpha_2 - 1}_0\vert J^{\epsilon} \tilde{z}(t) - \tilde{z}(t)\vert$:
		\begin{eqnarray}\label{lem2.4-pf-ineq2}
		J^{\alpha_2 - 1}_0\vert J^{\epsilon} \tilde{z}(t) - \tilde{z}(t)\vert &=& \frac{1}{\Gamma(\alpha_2 - 1)}\int_0^t (t-\tau)^{\alpha_2 - 2}\vert J^{\epsilon}_0\tilde{z}(t) - \tilde{z}(t)\vert d\tau \nonumber \\
		&\leq& \frac{T^{\alpha_2 - 1}}{\Gamma(\alpha_2 - 1)} \Vert J^{\epsilon}_0\tilde{z}(t) - \tilde{z}(t)\Vert_{\infty} \nonumber \\
		&\leq& \frac{T^{\alpha_2 - 1}}{\Gamma(\alpha_2)} \Vert J^{\epsilon}_0\tilde{z}(t) - \tilde{z}(t)\Vert_{\infty}.
		\end{eqnarray}
		Similarly, we can obtain the following inequality:
		\begin{equation}\label{lem2.4-pf-ineq3}
		J^{\alpha_2 - \epsilon}_0\vert J^{\epsilon}_0 \tilde{z}(t) - \tilde{z}(t)\vert \leq \frac{T^{\alpha_2 - \epsilon}}{\Gamma(\alpha_2 - \epsilon + 1)}\Vert J^{\epsilon}_0\tilde{z}(t) - \tilde{z}(t)\Vert_{\infty}.
		\end{equation}
		Applying the inequality \eqref{lem2.4-pf-ineq2} into \eqref{lem2.4-pf-ineq1}, we have the following inequality:
		\begin{equation}\label{lem2.4-pf-ineq4}
		\vert z(t)-\tilde{z}(t) \vert \leq L \left[ J^{\alpha_2-\epsilon}_0 \vert w(t)-\tilde{z}(t) \vert + J^{\alpha_2-1+\epsilon}_0 \vert z(t)-\tilde{z}(t) \vert + \frac{T^{\alpha_2-1}}{\Gamma(\alpha_2)}\Vert J^{\epsilon}_0\tilde{z}(t) - \tilde{z}(t) \Vert_{\infty} \right].
		\end{equation}
		Using the inequalities \eqref{diff_w_z} and \eqref{lem2.4-pf-ineq3}, we have the following inequalities:
		\begin{eqnarray} \label{ast}
		J^{\alpha_2-\epsilon}_0\vert w(t) - \tilde{z}(t) \vert  &\leq& J^{\alpha_2-\epsilon}_0\left( J^{\epsilon}_0\vert z(t) - \tilde{z}(t) \vert +  \vert J^{\epsilon}_0\tilde{z}(t)-\tilde{z}(t) \vert \right) \nonumber \\
		&\leq& J^{\alpha_2}_0\vert z(t) - \tilde{z}(t) \vert + \frac{T^{\alpha_2-\epsilon}}{\Gamma(\alpha_2-\epsilon+1)}\Vert J^{\epsilon}_0 \tilde{z}(t)-\tilde{z}(t) \Vert_{\infty}.
		\end{eqnarray}
		Applying the inequality \eqref{ast} into \eqref{lem2.4-pf-ineq4} and using Lemma \ref{order_com_lemma} with $\gamma=\alpha_2 - 1, \ \alpha = \alpha_2 - 1 + \epsilon, \ \beta = \alpha_2$, we have the following inequalities:
		\begin{equation*}
		\begin{aligned}
		\vert z(t)-\tilde{z}(t) \vert &\leq L \bigg[ J^{\alpha_2}_0 \vert z(t)-\tilde{z}(t) \vert + \frac{T^{\alpha_2-\epsilon}}{\Gamma(\alpha_2-\epsilon+1)}\Vert J^{\epsilon}_0\tilde{z}(t) - \tilde{z}(t) \Vert_{\infty}\\
		& \hspace{4cm} + J^{\alpha_2-1+\epsilon}_0 \vert z(t) - \tilde{z}(t) \vert + \frac{T^{\alpha_2-1}}{\Gamma(\alpha_2)} \Vert J^{\epsilon}_0\tilde{z}(t) - \tilde{z}(t) \Vert_{\infty} \bigg]\\
		&= L\bigg[ J^{\alpha_2-\epsilon}_0 \vert z(t)-\tilde{z}(t) \vert + J^{\alpha_2-1+\epsilon}_0\vert z(t)-\tilde{z}(t) \vert\\
		& \hspace{4cm} +\left\{ \frac{T^{\alpha_2-\epsilon}}{\Gamma(\alpha_2-\epsilon+1)} + \frac{T^{\alpha_2-1}}{\Gamma(\alpha_2)} \right\} \Vert J^{\epsilon}_0\tilde{z}(t) - \tilde{z}(t) \Vert_{\infty} \bigg]\\
		&\leq L\bigg[ \left\{  \frac{\Gamma(\alpha_2-1)}{\Gamma(\alpha_2-1+\epsilon)}T^{\epsilon} + \frac{\Gamma(\alpha_2-1)}{\Gamma(\alpha_2)}T \right\} J^{\alpha_2-1}_0 \vert z(t)-\tilde{z}(t) \vert\\
		&\hspace{4cm} + \left\{ \frac{T^{\alpha_2-\epsilon}}{\Gamma(\alpha_2-\epsilon+1)} + \frac{T^{\alpha_2-1}}{\Gamma(\alpha_2)} \right\} \Vert J^{\epsilon}_0\tilde{z}(t) - \tilde{z}(t) \Vert_{\infty} \bigg],
		\end{aligned}
		\end{equation*} 
		so that 
		\begin{equation}\label{lem2.4-pf-ineq5}
		\vert z(t) - \tilde{z}(t) \vert \leq LC^1_{\epsilon}J^{\alpha_2-1}_0\vert z(t)-\tilde{z}(t) \vert + LC^2_{\epsilon}\Vert J^{\epsilon}_0 \tilde{z}(t) - \tilde{z}(t) \Vert_{\infty},
		\end{equation}
		where 
		\begin{equation*}
		C^1_{\epsilon} \equiv  \frac{\Gamma(\alpha_2-1)}{\Gamma(\alpha_2-1+\epsilon)}T^{\epsilon} + \frac{\Gamma(\alpha_2-1)}{\Gamma(\alpha_2)}T,~~C^2_{\epsilon} \equiv \frac{T^{\alpha_2-\epsilon}}{\Gamma(\alpha_2-\epsilon+1)}+\frac{T^{\alpha_2-1}}{\Gamma(\alpha_2)}.
		\end{equation*}
		Applying Lemma \ref{lem.2} into inequality \eqref{lem2.4-pf-ineq5}, we obtain the following inequality:
		\begin{equation*}
		\vert z(t) - \tilde{z}(t) \vert \leq LC^2_{\epsilon}\Vert J^{\epsilon}_0\tilde{z}(t)-\tilde{z}(t) \Vert_{\infty}E_{\alpha_2-1}[LC^1_{\epsilon}T^{\alpha_2-1}].
		\end{equation*}
		Therefore,
		\begin{equation*}
		\begin{aligned}
		\vert \tilde{y}(t) - \hat{y}(t) \vert &\leq J^{1-\epsilon}_0\vert w(t) - \tilde{z}(t) \vert\\
		&\leq J^1_0\vert z(t)-\tilde{z}(t) \vert + J^{1-\epsilon}_0\vert J^{\epsilon}_0 \tilde{z}(t) - \tilde{z}(t) \vert\\
		&\leq T\vert z(t)-\tilde{z}(t) \vert + \frac{T^{1-\epsilon}}{\Gamma(2-\epsilon)}\Vert J^{\epsilon}_0 \tilde{z}(t) - \tilde{z}(t) \Vert_{\infty}\\
		&\leq TLC^2_{\epsilon}\Vert J^{\epsilon}_0\tilde{z}(t)-\tilde{z}(t) \Vert_{\infty}E_{\alpha_2-1}[LC^1_{\epsilon}T^{\alpha_2-1}]+ \frac{T^{1-\epsilon}}{\Gamma(2-\epsilon)}\Vert J^{\epsilon}_0 \tilde{z}(t) - \tilde{z}(t) \Vert_{\infty}\\
		&=\left[ TLC^2_{\epsilon}E_{\alpha_2-1}[LC^1_{\epsilon}T^{\alpha_2-1}] + \frac{T^{1-\epsilon}}{\Gamma(2-\epsilon)} \right] \Vert J^{\epsilon}_0 \tilde{z}(t) - \tilde{z}(t) \Vert_{\infty}.
		\end{aligned}
		\end{equation*}
		$C_{\epsilon} \equiv TLC^2_{\epsilon}E_{\alpha_2-1}[LC^1_{\epsilon}T^{\alpha_2-1}] + \frac{T^{1-\epsilon}}{\Gamma(2-\epsilon)}.$ Then,
		\begin{equation*}
		\vert \tilde{y}(t) - \hat{y}(t) \vert  \leq C_{\epsilon}\Vert J^{\epsilon}_0 \tilde{z}(t) - \tilde{z}(t)\Vert_{\infty}.
		\end{equation*}
		Thus, by Lemma \ref{lem.1},
		\begin{equation*}
		\vert \tilde{y}(t) - \hat{y}(t) \vert  \leq C_{\epsilon}\Vert J^{\epsilon}_0 \tilde{z}(t) - \tilde{z}(t)\Vert_{\infty} \rightarrow 0,~\text{as}~\epsilon \rightarrow 0.
		\end{equation*}
	\end{proof}

	For the FNBVP with RBCs \eqref{eq:1}, similarly, by Lemma \ref{Lemma_of_Gronwall_ineq} and using Theorem \ref{Transform}, we obtain the following FIVPs:
	\begin{equation}\label{FDE_ODE-ver2-RBC}
		\begin{cases}
			D^{1-\epsilon}_0y(t) = w(t), & y(0) = \frac{\gamma_1-b_1s}{a_1},\\
			D^{\epsilon}_0w(t) = z(t), & w(0) = 0,\\
			D^{\alpha_2-1}_0z(t) = f(t,y(t),w(t)), & z(0)  = s.
		\end{cases}
	\end{equation}
	The system \eqref{FDE_ODE-ver2-RBC} can be reduced as follows:
	\begin{equation}\label{FDE_ODE-ver3-RBC}
	\begin{cases}
	D^{1-\epsilon}_0\tilde{y}(t) = \tilde{z}(t), & \tilde{y}(0) = \frac{\gamma_1-b_1s}{a_1},\\
	D^{\alpha_2-1}_0\tilde{z}(t) = f(t,\tilde{y}(t),\tilde{z}(t)). & \tilde{z}(0) = s.
	\end{cases}
	\end{equation}
	\section{Nonlinear Shooting Methods and High-Order Predictor-Corrector Methods}\label{sec3}
	FBVPs have been transformed to systems of FIVPs in Section \ref{sec2}.
	Before we address how to deal with systems of FIVPs \eqref{T_DIVP}, \eqref{T_RIVP}, \eqref{FDE_ODE-ver3}, and \eqref{FDE_ODE-ver3-RBC} using High-order Predictor-Corrector Methods (HPCMs), the unknown IC $z(0)=s$ should be handled first. In this section, we describe two nonlinear shooting techniques based on Newton's method and Halley's method to approximate $s$. Both Newton's formula and Halley's formula are designed to determine the solution of a system of FIVPs satisfying the RBC at the right end point of an interval. Without loss of generality, we consider the system of FIVPs \eqref{T_RIVP} that is equivalent to the FNBVP with RBCs \eqref{eq:1}.
	
	In order that the RBC at the right end point $a_2y(b) + b_2y'(b)=\gamma_2$ is involved in approximating $s$, we define  $y(s) := y(s,t)\vert_{t=b}$ and let the error function be $F(s) := a_2y(s)+b_2\frac{\partial}{\partial t}y(s)-\gamma_2$. We approximate the solution of the  root-finding problem $F(s)=0$ by using Newton's method and Halley's method, respectively. For convenience, we denote $$ g_s(t) = \frac{\partial g(s,t)}{\partial s},~~g_{ss} = \frac{\partial^2 g(s,t)}{\partial s^2}$$ throughout this section.
	
	
	
	\subsection{Shooting with Newton's Method}
	The conventional Newton's formula for $F(s)=0$ can be expressed as follows:
	\begin{equation}\label{newton}
	s_{k+1} = s_k-\frac{F(s_k)}{F_s(s_k)},~~~k=0,1,2,\ldots, m,
	\end{equation}
	where $m$ is the maximum number of iterations and 
	\begin{eqnarray}\label{newton-fs}
		F_s(s_k)&=&\frac{\partial F}{\partial s}(s)\vert_{s=s_k, t=b} \nonumber \\
		&=& a_2 \frac{\partial y}{\partial s}(s,t)\vert_{s=s_k, t=b} ~ + ~ b_2\frac{\partial}{\partial s}\left[\frac{\partial y}{\partial t}(s,t)\right]\Bigl{|}_{s=s_k, t=b}.
	\end{eqnarray}
	Observing $y_s(s_k)$ and $y_{ts}(s_k)$, it turns out that they are equal to $\frac{\partial}{\partial s}y(t)\vert_{s=s_k, t=b} \text{ and } \frac{\partial}{\partial s}z(t)\vert_{s=s_k, t=b}$, respectively in the system of FIVPs \eqref{T_RIVP}. Thus we solve the following system obtained from system of FIVPs \eqref{T_RIVP} by applying the operator $\frac{\partial}{\partial s}$ using HPCMs for each $k$:
	\begin{equation}\label{Newton-system}
		\begin{cases}
			D^{\alpha_1}_0y_s(t) = w_s(t), & y_s(0) = -b_1/a_1,\\
			D^{1-\alpha_1}_0w_s(t) = z_s(t), & w_s(0) = 0,\\
			D^{\alpha_2-1}_0z_s(t) = f_s(t,y(t),w(t)), & z_s(0) = 1.
		\end{cases}
	\end{equation}
	Since both $t$ and $s$ are independent variables, $f_s(t,y(t),w(t))$ can be written as
	\begin{equation*}
	f_s(t,y(t),w(t)) = f_y\cdot y_s (t) + f_w \cdot w_s (t).
	\end{equation*}
	The detailed description of HPCMs dealing with a system of FIVPs is in Subsection \ref{sec3-3}. By solving the system \eqref{Newton-system}, $s_{k+1}$ in the Newton's formula \eqref{newton} is computed. Using the updated approximate value of IC $s$, $s_{k+1}$, we update approximate solutions of systems of FIVPs \eqref{T_DIVP}, \eqref{T_RIVP}, \eqref{FDE_ODE-ver3}, and \eqref{FDE_ODE-ver3-RBC}. Repeating this process, we obtain an $s_k$ having an acceptable error of the root-finding problem $F(s)=0$ at an appropriate number of iterations $k$.

	\subsection{Shooting with Halley's Method }
	The conventional Halley's formula for $F(s)=0$ is as follows:
	\begin{equation*}
	s_{k+1} = s_k - \frac{2F(s_k)F_s(s_k)}{2F_s^{2}(s_k)-F(s_k)F_{ss}(s_k)},~~~k=0,1,2,\ldots, m,
	\end{equation*}
	where $F_s(s_k)$ is described in \eqref{newton-fs} and 
	\begin{eqnarray*}
	F_{ss}(s_k)&=&\frac{\partial^2 F}{\partial s^2}(s)\vert_{s=s_k, t=b}\\
	&=& a_2 \frac{\partial^2 y}{\partial s^2}(s,t)\vert_{s=s_k, t=b} ~ + ~ b_2\frac{\partial^2}{\partial s^2}\left[\frac{\partial y}{\partial t}(s,t)\right]\Bigl{|}_{s=s_k, t=b}.
\end{eqnarray*}
	Similar to the way we found $y_s(s_k)$ and $y_{ts}(s_k)$ in the shooting with Newton's method, we find $y_{ss}(s_k)$ and $z_{ss}(s_k)$ by solving the following system of FIVPs obtained by applying the operator $\frac{\partial^2}{\partial s^2}$ using HPCMs for each $k$:
	\begin{equation}\label{Halley-system}
		\begin{cases}
			D^{\alpha_1}_ay_{ss}(t) = w_{ss}(t), & y_{ss}(0) = 0,\\
			D^{1-\alpha_1}_aw_{ss}(t) = z_{ss}(t), & w_{ss}(0) = 0,\\
			D^{\alpha_2-1}_az_{ss}(t) = f_{ss}(t,y(t),w(t)), & z_{ss}(0) = 0.
		\end{cases}
	\end{equation}
	Since $t$ and $s$ are independent variables, $f_{ss}(t,y(t),w(t))$ can be written as
	\begin{equation*}
	f_y\cdot y_{ss} (t) + f_w \cdot w_{ss} (t) + f_{yy}\cdot y_s(t)^2 + f_{ww}\cdot w_s(t)^2 + f_{wy}\cdot w_s(t) y_s(t).
	\end{equation*}

	\subsection{High-order Predictor-Corrector Methods for system of FIVPs}\label{sec3-3}
	In order to find a $s_k$ with an acceptable accuracy, we iteratively solve systems of FIVPs \eqref{Newton-system} or \eqref{Halley-system}. Once we find the $s_k$, we solve systems of FIVPs \eqref{T_DIVP}, \eqref{T_RIVP}, \eqref{FDE_ODE-ver3}, or \eqref{FDE_ODE-ver3-RBC}.
	In this subsection, we describe how to deal with those systems of FIVPs using High-order Predictor-Corrector Methods (HPCMs) introduced in paper \cite{nguyen2017high}.
	Without loss of generality, we consider the following FIVP:
\begin{equation}\label{eq:apndx-fivp}
	\begin{cases}
		D^{\alpha}_0y(t) = f(t, y(t)), \ t\in[0,b], \\
		D^{(i)}y(0) = c_i, \ i=0, \ldots \lfloor \alpha\rfloor.
	\end{cases}	
\end{equation}
For convenience, let us denote $y_j$ as approximated value of $y(t_j)$ except for $y_0=c_0$ and let $f_j \equiv f(t_j,y_j)$, $y^c_j$ be a corrector of $y_j$, $y^p_j$ be a predictor of $y_j$, and $f_j^p\equiv f(t_j, y_j^p)$, $j=1,\cdots,N$. If $j=0$ then, $f_0 = f(0,c_0)$. We divide the domain $\Omega$ as follows:
\begin{equation*}
\Phi_N := \{\ t_j \,\vert\, a = t_0 < \cdots < t_j < \cdots < t_n < t_{n+1} < \cdots < t_N = b \}.
\end{equation*}

For simplicity, let step size be uniform which means $t_{j+1}-t_j = h, ~j = 0,1\cdots,N-1$. Then \eqref{eq:apndx-fivp} can be rewritten at time $t_{n+1}$ as follows:

\begin{equation*}
y(t_{n+1}) = g(t_{n+1}) + \frac{1}{\Gamma(\alpha)}\sum_{j=0}^{n}\int_{t_j}^{t_{j+1}}(t_{n+1}-\tau)^{\alpha-1}f(\tau,y(\tau))d\tau,
\end{equation*}

where $g(t_{n+1}) = \sum_{i=0}^{\lfloor\alpha\rfloor}\frac{(t_{n+1})^{i}}{i!}c_i.$ We interpolate $f(\tau,y(\tau))$ using linear or quadratic Lagrange polynomials over each interval $I_{j} = [ t_j,t_{j+1}],~j=0,1,\cdots,N-1$. Then we obtain the following predictor-corrector schemes:

\begin{enumerate}
	\item \emph{HPCM with linear Lagrange polynomial}:
	\begin{equation*}
	y^c_{n+1} = g(t_{n+1}) + \frac{1}{\Gamma(\alpha)}\left[\sum_{j=0}^{n-1}\left( B^{1,j}_{n+1}f_j+B^{2,j}_{n+1}f_{j+1} \right) + B^{1,n}_{n+1}f_n+B^{2,n}_{n+1}f^P_{n+1}\right],
	\end{equation*}
	where 
	\begin{alignat*}{2}
		B^{1,j}_{n+1} &= \frac{1}{h}\int_{t_j}^{t_{j+1}}(t_{n+1}-\tau)^{\alpha-1}(t_{j+1}-\tau)d\tau, \quad & B^{2,j}_{n+1} &= -\frac{1}{h}\int_{t_j}^{t_{j+1}}(t_{n+1}-\tau)^{\alpha-1}(t_j-\tau)d\tau, \\
		y^P_{n+1} &= g(t_{n+1}) + G_{\alpha, f}(t_{n+1}) + b^{1}_{n+1}f_{n-1} + b^{2}_{n+1}f_n,
		\quad & G_{\alpha, f}(t_{n+1}) &= \frac{1}{\Gamma(\alpha)}\sum_{j=0}^{n-1}\left( B^{1,j}_{n+1}f_j + B^{2,j}_{n+1}f_{j+1} \right), \\
		b^{1}_{n+1} &= \frac{1}{h}\int_{t_n}^{t_{n+1}}(t_{n+1}-\tau)^{\alpha-1}(t_n-\tau)d\tau,
		\quad & b^{2}_{n+1} &= -\frac{1}{h}\int_{t_n}^{t_{n+1}}(t_{n+1}-\tau)^{\alpha-1}(t_{n-1}-\tau)d\tau.
	\end{alignat*}
	\item \emph{HPCM with quadratic Lagrange polynomial}:
	\begin{equation*}
		\begin{aligned}
			y^c_{n+1} &= g(t_{n+1}) + \frac{1}{\Gamma(\alpha)}\bigg[ A^{1,0}_{n+1}f_0 + A^{2,0}_{n+1}f_{1/2} + A^{3,0}_{n+1}f_1 \\
			&\hspace{2cm}+\sum_{j=1}^{n-1}\left( A^{1,j}_{n+1}f_{j-1}+A^{2,j}_{n+1}f_j+A^{3,j}_{n+1}f_{j+1} \right) + A^{1,n}_{n+1}f_{n-1} + A^{2,n}_{n+1}f_n + A^{3,n}_{n+1}f^P_{n+1}\bigg],
		\end{aligned}
	\end{equation*}
	where
	\begin{alignat*}{2}
		A^{1,0}_{n+1} &= \frac{2}{h^2}\int_{t_j}^{t_{j+1}}(t_{n+1}-\tau)^{\alpha-1}(t_{1/2}-\tau)(t_1-\tau)d\tau, \quad & 
		A^{2,0}_{n+1} &= -\frac{4}{h^2}\int_{t_j}^{t_{j+1}}(t_{n+1}-\tau)^{\alpha-1}(t_0-\tau)(t_1-\tau)d\tau, \\
		A^{3,0}_{n+1} &= \frac{2}{h^2}\int_{t_j}^{t_{j+1}}(t_{n+1}-\tau)^{\alpha-1}(t_0-\tau)(t_{1/2}-\tau)d\tau, \quad & 
		A^{1,j}_{n+1} &= \frac{1}{2h^2}\int_{t_j}^{t_{j+1}}(t_{n+1}-\tau)^{\alpha-1}(t_j-\tau)(t_{j+1}-\tau)d\tau, \\
		A^{2,j}_{n+1} &= -\frac{1}{h^2}\int_{t_j}^{t_{j+1}}(t_{n+1}-\tau)^{\alpha-1}(t_{j-1}-\tau)(t_{j+1}-\tau)d\tau, \quad & 
		A^{3,j}_{n+1} &= \frac{1}{2h^2}\int_{t_j}^{t_{j+1}}(t_{n+1}-\tau)^{\alpha-1}(t_{j-1}-\tau)(t_j-\tau)d\tau,
	\end{alignat*}
	and the predictor $f^p_{n+1}$ is found as follows:
	\begin{equation*}
	y^P_{n+1} = g(t_{n+1}) + G_{\alpha, f}(t_{n+1}) + a^{1}_{n+1}f_{n-2} + a^{2}_{n+1}f_{n-1} + a^{3}_{n+1}f_n,
	\end{equation*}
	where
	\begin{equation*}
		\begin{aligned}
			G_{\alpha, f}(t_{n+1}) &= \frac{1}{\Gamma(\alpha)}\bigg[ A^{1,0}_{n+1}f_0 + A^{2,0}_{n+1}f_{1/2} + A^{3,0}_{n+1}y(t_1) + \sum_{j=0}^{n-1}\left( A^{1,j}_{n+1}f_{j-1} + A^{2,j}_{n+1}f_j + A^{3,j}_{n+1}f_{j+1} \right) \bigg],
		\end{aligned}
	\end{equation*}
	\begin{alignat*}{2}
		a^{1}_{n+1} &= \frac{1}{2h^2}\int_{t_n}^{t_{n+1}}(t_{n+1}-\tau)^{\alpha-1}(t_{n-1}-\tau)(t_n-\tau)d\tau, \quad &
		a^{2}_{n+1} &= -\frac{1}{h^2}\int_{t_n}^{t_{n+1}}(t_{n+1}-\tau)^{\alpha-1}(t_{n-2}-\tau)(t_n-\tau)d\tau, \\
		a^{3}_{n+1} &= \frac{1}{2h^2}\int_{t_n}^{t_{n+1}}(t_{n+1}-\tau)^{\alpha-1}(t_{n-2}-\tau)(t_{n-1}-\tau)d\tau. \quad & &
	\end{alignat*}

\end{enumerate}
We implement HPCMs in a FIVP. For the purpose of the implementation to the system \eqref{T_DIVP}, as an example, we find predictors $y^p_{n+1}, \ w^p_{n+1}, \ z^p_{n+1}$ with $s_0$ individually. We compute $f^p_{n+1}\equiv f(t_{n+1}, y^p_{n+1}, w^p_{n+1})$, and we find correctors $y^c_{n+1}, w^c_{n+1}, z^c_{n+1}$. Using proposed shooting techniques with HPCMs we find $s_1$ and then find predictors $y^p_{n+1}, \ w^p_{n+1}, \ z^p_{n+1}$ replacing $s_0$ with $s_1$, compute $f^p_{n+1}$, and update correctors $y^c_{n+1}, w^c_{n+1}, z^c_{n+1}$. We repeat this process until the absolute value of the approximated error function
\begin{equation}\label{eqn:app_err_ft}
|\tilde{F}(s_k)| = |a_2y^c_N(s_k) + b_2z^c_N(s_k) - \gamma_2|
\end{equation}
is within a tolerance. Similarly, we apply the scheme with HPCMs into other systems of FIVPs \eqref{T_RIVP}, \eqref{FDE_ODE-ver3}, and \eqref{FDE_ODE-ver3-RBC}.

\begin{remark}
	Alternatively, we only find predictors $y^p_{n+1}, \ w^p_{n+1}$. We then compute $z_{n+1}$ using $y^p_{n+1}, \ w^p_{n+1}$, and then we compute $w_{n+1}$ using $z_{n+1}$ as a predictor, and  $y_{n+1}$ using $w_{n+1}$ as a predictor. But it turns out that numerical results obtained by both ways of implementing HPCMs are nearly identical.
\end{remark}
The following theorems \cite{nguyen2017high} bound the Global error $E_{n+1}$ of the HPCM with linear and quadratic interpolations, respectively.
\begin{thm}{(Global Error of HPCM with Linear Interpolation)}\label{apndx2-thm-linear}
	Define $E_{n+1}$ to be global error. Suppose $f(\cdot,y(\cdot))\in C^2[a,b]$ and furthermore is Lipschitz continuous in the second argument, then we have
	\begin{equation*}
	E_{n+1} = \vert y(t_{n+1}) - \tilde{y}_{n+1} \vert \leq \mathcal{O}(h^2),
	\end{equation*}
	given $E_1\le Ch^2$.
\end{thm}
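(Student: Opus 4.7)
The plan is to derive an error recursion by subtracting the corrector formula from the exact Volterra representation
\begin{equation*}
y(t_{n+1}) = g(t_{n+1}) + \frac{1}{\Gamma(\alpha)}\sum_{j=0}^{n}\int_{t_j}^{t_{j+1}}(t_{n+1}-\tau)^{\alpha-1}f(\tau,y(\tau))\,d\tau,
\end{equation*}
and to close it with a discrete fractional Gronwall inequality. I would first decompose $E_{n+1}=|y(t_{n+1})-\tilde{y}_{n+1}|$ into three pieces: (i) a local consistency piece measuring how well the piecewise linear Lagrange interpolant of $f(\tau,y(\tau))$ approximates the integrand on each subinterval $[t_j,t_{j+1}]$, $j=0,\dots,n-1$; (ii) a propagation piece that converts exact nodal values $f(t_j,y(t_j))$ into the computed $f_j=f(t_j,y_j)$ via Lipschitz continuity; and (iii) a final-subinterval piece involving the predictor contribution $f^P_{n+1}=f(t_{n+1},y^P_{n+1})$.

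For (i), since $f(\cdot,y(\cdot))\in C^2[a,b]$, the Peano-kernel estimate for linear interpolation gives a per-subinterval integrand error of $O(h^2)$; multiplying by the integrable weight $(t_{n+1}-\tau)^{\alpha-1}$ and summing over $j$ bounds the contribution by $C h^2 \int_0^{t_{n+1}}(t_{n+1}-\tau)^{\alpha-1}\,d\tau = O(h^2)$ uniformly in $n$. For (ii), Lipschitz continuity of $f$ converts $|f(t_j,y(t_j))-f_j|$ into $L E_j$, weighted by $|B^{1,j}_{n+1}|+|B^{2,j}_{n+1}|$. For (iii), an analogous Taylor analysis of the predictor, which linearly extrapolates from $t_{n-1}$ and $t_n$, yields
\begin{equation*}
|y(t_{n+1})-y^P_{n+1}| \le C h^2 + L\sum_{j=0}^{n}\omega_{n+1,j}\,E_j,
\end{equation*}
with weights $\omega_{n+1,j}$ whose total mass is $O(h^\alpha)$; feeding this through the Lipschitz hypothesis bounds $|f(t_{n+1},y(t_{n+1}))-f^P_{n+1}|$ by the same quantity.

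Assembling the three pieces produces a recursion of the form
\begin{equation*}
E_{n+1} \le C_1 h^2 + \frac{C_2}{\Gamma(\alpha)}\sum_{j=0}^{n} h^\alpha \mu_{n+1,j}\,E_j,
\end{equation*}
with $\mu_{n+1,j}$ uniformly bounded in $n$ and $j$; the startup hypothesis $E_1\le Ch^2$ handles the initial step not covered by the generic estimate. A discrete fractional Gronwall inequality, in the spirit of the discrete analogue of Lemma \ref{lem.2}, then yields $E_{n+1}\le C h^2\, E_{\alpha}(C_2 T^\alpha) = O(h^2)$ uniformly for $t_{n+1}\in[a,b]$.

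The main obstacle I anticipate is the careful bookkeeping of the quadrature weights $B^{1,j}_{n+1},B^{2,j}_{n+1},b^{1}_{n+1},b^{2}_{n+1}$: one must verify that, after normalization by $1/\Gamma(\alpha)$, their partial sums behave like $O(h^\alpha)$ so that the Gronwall constant does not pick up a spurious factor of $h^{-1}$, and one must propagate the predictor's $O(h^2)$ error cleanly through the kernel singularity at $\tau=t_{n+1}$. Once these weight bounds and the predictor estimate are in hand, the remainder of the argument is a fairly mechanical application of fractional discrete Gronwall.
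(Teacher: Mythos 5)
The paper does not actually prove this theorem: it is imported verbatim from the cited reference \cite{nguyen2017high}, and the manuscript only states it as a known global error bound for the HPCM. So there is no in-paper proof to compare against; what can be said is that your outline reproduces the standard argument used in that reference. Your three-way decomposition (linear-interpolation consistency error against the weakly singular kernel, Lipschitz propagation of nodal errors through the weights $B^{1,j}_{n+1},B^{2,j}_{n+1}$, and a separate predictor estimate on the last subinterval) followed by a weakly singular discrete Gronwall inequality is exactly the right skeleton, and the hypotheses you invoke ($f(\cdot,y(\cdot))\in C^2$, Lipschitz continuity in the second argument, the startup bound $E_1\le Ch^2$ needed because the predictor uses the two back values $f_{n-1},f_n$) are the ones the theorem supplies. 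The one point to tighten is your claim that the weights have ``total mass $O(h^\alpha)$'' with $\mu_{n+1,j}$ uniformly bounded: what one actually gets is $B^{1,j}_{n+1}+B^{2,j}_{n+1}=\frac{1}{\alpha}\bigl[(t_{n+1}-t_j)^\alpha-(t_{n+1}-t_{j+1})^\alpha\bigr]\le h\,(t_{n+1}-t_{j+1})^{\alpha-1}$ for $0<\alpha<1$, i.e. $h^\alpha (n-j)^{\alpha-1}$, whose sum over $j$ is $O(t_{n+1}^\alpha)$, not $O(h^\alpha)$ per term times $n$ bounded factors; the Dixon--McKee type discrete Gronwall lemma is designed precisely for kernels of this form and closes the recursion without a spurious $h^{-1}$. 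With that bookkeeping made explicit (and noting that in this paper the HPCM is only applied to equations of order in $(0,1)$ after the reduction of Section \ref{sec2}, so the $\alpha>1$ case of the weight bound is not needed), your argument is a correct, self-contained route to the stated $\mathcal{O}(h^2)$ bound.
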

\begin{thm}{(Global Error of HPCM with Quadratic Interpolation)}\label{apndx2-thm-quad}
	Suppose $f(\cdot,y(\cdot))\in C^3[a,b]$ and is Lipshitz continuous in the second argument, then we have
	\begin{equation*}
	E_{n+1} \leq \mathcal{O}(h^3),
	\end{equation*}
	given $E_1,E_2 \leq \mathcal{O}(h^3)$ and $E_{1/2} \leq O(h^{3-\alpha}), \ 0 < \alpha < 1$.
\end{thm}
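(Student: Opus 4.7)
The plan is to set up a standard weakly-singular Gronwall-type estimate for the fractional predictor-corrector scheme, following the pattern used for Theorem \ref{apndx2-thm-linear} but tightening every bound by one order thanks to the piecewise quadratic Lagrange interpolation. I would begin by subtracting the Volterra identity
\[
y(t_{n+1}) = g(t_{n+1}) + \frac{1}{\Gamma(\alpha)}\sum_{j=0}^{n}\int_{t_j}^{t_{j+1}}(t_{n+1}-\tau)^{\alpha-1}f(\tau,y(\tau))\,d\tau
\]
from the corrector formula for $y^c_{n+1}$ to obtain an exact expression for $E_{n+1}=|y(t_{n+1})-y^c_{n+1}|$ that decomposes into three pieces: (i) a quadrature error $R_{n+1}$ from replacing $f(\tau,y(\tau))$ by its quadratic Lagrange interpolant on each subinterval (through $(t_0,t_{1/2},t_1)$ on the first subinterval, through $(t_{j-1},t_j,t_{j+1})$ for $1\le j\le n-1$, and through $(t_{n-1},t_n,t_{n+1})$ on the last); (ii) a propagation error $\frac{L}{\Gamma(\alpha)}\sum_{k,j}|A^{k,j}_{n+1}|E_{*}$ coming from Lipschitz continuity of $f$ in its second argument; and (iii) a predictor contribution $\frac{L|A^{3,n}_{n+1}|}{\Gamma(\alpha)}\,|y(t_{n+1})-y^P_{n+1}|$.

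For piece (i), because $f(\cdot,y(\cdot))\in C^3[a,b]$, the classical quadratic-Lagrange remainder on $[t_j,t_{j+1}]$ has the form $\frac{1}{6}\,\partial_\tau^3 f(\xi,y(\xi))\,\omega_j(\tau)$ with nodal polynomial $\omega_j$ satisfying $\|\omega_j\|_\infty=O(h^3)$. Multiplying by the weakly singular kernel $(t_{n+1}-\tau)^{\alpha-1}$, integrating over $[t_j,t_{j+1}]$, and summing over $j=0,\dots,n$ gives $|R_{n+1}|\le C h^3\int_0^{t_{n+1}}(t_{n+1}-\tau)^{\alpha-1}d\tau = C h^3\, t_{n+1}^\alpha/\alpha = O(h^3)$ on the bounded interval $[a,b]$. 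For piece (iii), the same quadratic-remainder bookkeeping applied to the extrapolating predictor (whose coefficients satisfy $|a^k_{n+1}|=O(h^\alpha)$ by direct estimation) yields $|y(t_{n+1})-y^P_{n+1}|=O(h^3)$, provided the already-computed $E_{n-2},E_{n-1},E_n$ are $O(h^3)$, which holds by the induction hypothesis; this predictor bound is then inherited by piece (iii).

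For piece (ii), the essential observation is that each weight satisfies $|A^{k,j}_{n+1}|\le C\,h\,(t_{n+1}-t_j)^{\alpha-1}$, since the quadratic Lagrange basis functions are uniformly bounded on their supports. Assembling (i)--(iii) produces the recursion
\[
E_{n+1} \le C_1 h^3 + C_2\, h \sum_{j=1}^{n}(t_{n+1}-t_j)^{\alpha-1} E_j ,
\]
to which a discrete weakly-singular Gronwall inequality, the discrete analogue of Lemma \ref{lem.2}, applies and yields $E_{n+1}\le C\,h^3\, E_\alpha(C'\,T^\alpha)=O(h^3)$. The base of the induction is covered exactly by the assumed $E_1,E_2=O(h^3)$; the auxiliary half-step value $E_{1/2}=O(h^{3-\alpha})$ enters only through the first-subinterval weight $A^{2,0}_{n+1}$, which a direct computation shows satisfies $|A^{2,0}_{n+1}|=O(h\, t_{n+1}^{\alpha-1})$, so its contribution $|A^{2,0}_{n+1}|\cdot L\cdot E_{1/2}=O(h^{4-\alpha})=o(h^3)$ for $\alpha\in(0,1)$ is absorbed into the $O(h^3)$ budget.

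The main obstacle will be the first-subinterval analysis, where the quadratic interpolant uses the nonuniform node set $\{t_0,t_{1/2},t_1\}$: the usual equispaced three-point Lagrange remainder must be re-derived, and one must track constants carefully because $(t_{n+1}-\tau)^{\alpha-1}$ is actually smoothest on this farthest-away subinterval. A secondary technical point is the simultaneous induction on predictor and corrector accuracies in piece (iii); one must verify that the extrapolation coefficients $a^k_{n+1}$ stay uniformly bounded in $n$ so that the predictor never degrades the overall $O(h^3)$ rate. Once these two points are settled, the discrete Gronwall step is routine and closes the argument.
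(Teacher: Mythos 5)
Note first that the paper itself gives no proof of Theorem \ref{apndx2-thm-quad}: it is quoted verbatim from \cite{nguyen2017high}, and your outline — splitting the corrector error into the quadratic-Lagrange quadrature remainder, a Lipschitz propagation term with weights bounded by $Ch(t_{n+1}-t_j)^{\alpha-1}$, and a predictor contribution damped by $|A^{3,n}_{n+1}|=O(h^{\alpha})$, then closing with a weakly singular discrete Gronwall inequality — is essentially the argument of that reference, and it is sound. Two small refinements: the first subinterval uses the \emph{equispaced} half-step nodes $t_0,t_{1/2},t_1$ (so no nonuniform remainder needs re-deriving), and for the early steps ($t_{n+1}$ comparable to $h$) the correct bound is $|A^{2,0}_{n+1}|\le Ch^{\alpha}$, giving $|A^{2,0}_{n+1}|E_{1/2}\le Ch^{\alpha}\cdot h^{3-\alpha}=Ch^{3}$ exactly — this is precisely why the hypothesis on $E_{1/2}$ is stated as $O(h^{3-\alpha})$ rather than $O(h^{3})$.
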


	\section{Numerical Examples}\label{sec4}
	In this section, we experimentally illustrate the performance of the proposed schemes. Numerically, we verify that our proposed schemes can deal with more complex FBVPs than the integral discretization schemes in \cite{Huang3, Huang2}. For that purpose, the proposed schemes are implemented in FNBVPs with $0 < \alpha_1 < 1$ whose exact solutions are polynomial, exponential, and sine functions in Examples \ref{ex.1} through \ref{ex.3}. We investigate absolute errors in maximum norm, convergence rates, and absolute values of the approximated error function $\vert \tilde{F}(s_k) \vert$ with various values of parameters. We discuss linear FBVPs with $\alpha_1=1$ whose exact solutions have low regularity and high regularity in Examples \ref{ex.4} and \ref{ex.5}, respectively. We compare numerical results obtained by our proposed schemes with the integral discretization schemes. But we emphasize that our proposed methods can deal with many different FBVPs unlike the another method in Examples \ref{ex.4} and \ref{ex.5}. Regarding the numerical results shown in the Appendix, let us summarize the parameters used
	\begin{itemize}
		\item $h$ denotes the size of time sub-interval.
		\item $s_0$ denotes the initial approximation of the sequence $\{s_k\}$ in proposed shooting methods.
		\item $k$ denotes the index of sequence $\{s_k\}$ generated by the proposed Newton's method or Halley's method. It can be considered as the number of iterations needed to meet a tolerance.
		\item $m$ denotes the maximum number of iterations in Newton's and Halley's methods.
		\item $Tol$ denotes the tolerance used to measure the error of the approximated error function $|\tilde{F}(s_k)|$ in Newton's method and Halley's method.
		\item $N$ denotes the number of time sub-intervals.
		\item Max. error denotes the pointwise absolute errors in the maximum norm. (i.e. $\max\limits_{1\le j \le N}\vert y^c_j - y(t_j) \vert$)
		\item $E_{\alpha, \beta}(t)$ denotes the two-parameter function of Mittag-Leffler type \cite{Podlubny}. 
	\end{itemize}
	In Examples \ref{ex.1} through \ref{ex.3}, we transform the FNBVP into the system of FIVPs \eqref{T_RIVP} and $s_0$ means an initial approximation to $y'(0)$. In Examples  \ref{ex.4} and \ref{ex.5}, the linear FBVP is transformed into the system of FIVPs \eqref{FDE_ODE-ver3-RBC} with $\epsilon = 10^{-10}$ and $s_0$ means an initial approximation to $y(0)$.
	For all Examples except for Example \ref{ex.4}, we implement the shooting technique based on Newton's method (Halley's method) combined with HPCM with linear (quadratic) interpolation to verify the order of convergence $\mathcal{O}(h^2)$ $\Bigl{(}\mathcal{O}(h^3)\Bigr{)}$, respectively.
	\begin{example}{Consider the following double-term FNBVP with RBCs}\label{ex.1}
		\begin{equation*}
			\begin{cases}
				D^{\alpha_2}_{0}y(t) = \frac{\Gamma(5)}{\Gamma(5-\alpha_2)}t^{4-\alpha_2}-\frac{\Gamma(5)}{\Gamma(5-\alpha_1)}t^{4-\alpha_1}-t^8+y^2+D_{0}^{\alpha_1}y(t),\\
				y(0) + y'(0)=0,~y(1) + y'(1)=5,
			\end{cases}
		\end{equation*}
		where the exact solution is $y(t)=t^4$.
	\end{example}

	

	\begin{example}{Consider the following double-term FNBVP with RBCs:}\label{ex.2}
		\begin{equation*}
			\begin{cases}
				D^{\alpha_2}_{0}y(t) = \lam^2t^{2-\alpha_2}E_{1,3-\alpha_2}(\lam t)-\left( \lam^2\frac{\Gamma(3)}{2\Gamma(3-\alpha_2)}t^{2-\alpha_2}+\lam^3\frac{\Gamma(4)}{6\Gamma(4-\alpha_2)}t^{3-\alpha_2} \right) - A^2 + y^2 -tB + tD^{\alpha_1}_{0}y(t),\\
				y(0)+y'(0) = 0,~y(1)+y'(1) \approx 0.2699,
			\end{cases}
		\end{equation*}
		where
		\begin{eqnarray*}
		\lambda &=& 1,\\
		A &=& e^{\lam t} - \left(1+\lam t + \frac{\lam^2}{2}t^2 +\frac{\lam^3}{3!}t^3 \right),\\
		B &=& \lam t^{1-\alpha_1}E_{1,2-\alpha_1}(\lam t) - \left( \lam\frac{\Gamma(2)}{\Gamma(2-\alpha_1)}t^{1-\alpha_1}+\lam^2\frac{\Gamma(3)}{2\Gamma(3-\alpha_1)}t^{2-\alpha_1}+\lam^3\frac{\Gamma(4)}{6\Gamma(4-\alpha_1)}t^{3-\alpha_1} \right),
	\end{eqnarray*}
		and the exact solution is
			$y(t) = e^{\lam t} - \left( 1+\lam t + \frac{\lam^2}{2}t^2 + \frac{\lam^3}{3!}t^3 \right)$.
	\end{example}

	\begin{example}{Consider the following double-term FNBVP with RBCs} \label{ex.3}
		\begin{equation*}
			\begin{cases}
				D^{\alpha_2}_{0}y(t) = F^{(\alpha_2)}_{\lam}(t)+\frac{\Gamma(4)}{6\Gamma(4-\alpha_2)}t^{3-\alpha_2}+\left(\sin(t)-t+\frac{t^3}{6}\right)^2-y^2-F^{(\alpha_1)}_{\lam}(t)+\frac{\Gamma(2)}{\Gamma(2-\alpha_1)}t^{1-\alpha_1}-\frac{\Gamma(4)}{6\Gamma(4-\alpha_1)}t^{3-\alpha_1}+D^{\alpha_1}_{0}y(t),\\
				y(0)+y'(0) = 0,~y(1)+y'(1) \approx 4.84399,
			\end{cases}
		\end{equation*}
		where
		\begin{eqnarray*}
		F_{\lambda}^{(\alpha)}(t) &=& -\frac{1}{2}i(i\lam)^{(\lceil \alpha \rceil)}t^{(\lceil \alpha \rceil-\alpha)}(E_{\lceil \alpha \rceil-\alpha+1}(i\lam t)-(-1)^{(\lceil \alpha \rceil)}E_{1,\lceil \alpha \rceil-\alpha+1}(-i\lam t)), \\
		\lambda &=& 1, \\
		\end{eqnarray*}
		and the exact solution is $y(t) = \sin(\lam t) - t + \frac{t^3}{6}$.
	\end{example}
	
	In Examples \ref{ex.1}, \ref{ex.2}, and \ref{ex.3}, we observe the following:
	\begin{enumerate}
		\item For all three examples, $w(t), \ z(t), \ f(t,y,w)$ (e.g. $D_0^{\alpha_1}y(t), \ y'(t), \ f(t,y,D_0^{\alpha_2}y(t))$) belong to $C^3[0,1]$. By Theorems \ref{apndx2-thm-linear} and \ref{apndx2-thm-quad}, thus, computed convergence profiles are estimated $\mathcal{O}(h^2)$ and $\mathcal{O}(h^3)$ for the HPCM with linear (quadratic) interpolation combined with shooting technique based on Newton's (Halley's) method, respectively.
		\item Tables \ref{TABLE-ex1-m}, \ref{TABLE-ex2-m}, \ref{TABLE-ex3-m} show the absolute values of approximated error function \eqref{eqn:app_err_ft} at $s_m$ (i.e. $|a_2y^c_N(s_m) + b_2z^c_N(s_m) - \gamma_2|$) versus the maximum number of iterations $m$ with various initial values $s_0$. $y^c_N(s_m)$ and $z^c_N(s_m)$ are computed by using proposed schemes. We set $\alpha_1 = 0.4, \alpha_2 = 1.7, h=0.01$ in all tables. From numerical results in those tables, we can verify that the sequence $\{s_k\}$ obtained by  proposed shooting algorithms approaches to the IC $s$ within the error at least $10^{-16}$ when $m$ is at most $10$ with $s_0=0.2, 0.4, 0.6, 0.8, 1.0$ for each. This leads us to the conclusion that proposed shooting techniques show a good performance with remarkable accuracy regarding to approximation of the IC $s$.
		\item Tables \ref{TABLE-ex1-maxerr}, \ref{TABLE-ex2-maxerr}, \ref{TABLE-ex3-maxerr} show pointwise absolute errors in the maximum norm and convergence rates computed versus the number of subintervals $N$ in the cases of $s_0=0.2, 1.0$ for each of  Newton's and Halley's method. We set $\alpha_1 = 0.4, \alpha_2 = 1.7$. The sequence $\{s_k\}$ was computed up to $s_{10}$ so, based on the observation of Tables \ref{TABLE-ex1-m}, \ref{TABLE-ex2-m}, \ref{TABLE-ex3-m}, we see that the error of the approximated error function $|\tilde{F}(s_k)|$ does not have an effect on the convergence rate of $y^c_j$ obtained by HPCMs. In Tables \ref{TABLE-ex1-maxerr}, \ref{TABLE-ex2-maxerr}, \ref{TABLE-ex3-maxerr}, we can see that computed convergence profiles obtained by proposed schemes approach 2 for Newton's method and 3 for Halley's method as $N$ is increased. Thus numerical results shown in those tables support that proposed methods follow global error estimates of HPCMs.
		\item Proposed methods are tested for a variety of values of $\alpha_1, \alpha_2$ and numerical results for each pair of $(\alpha_1, \alpha_2)$ are shown in Tables \ref{TABLE-ex1-alpha}, \ref{TABLE-ex2-alpha}, \ref{TABLE-ex3-alpha}. For each pair of fractional orders $(\alpha_1, \alpha_2)$ pointwise absolute errors in the maximum norm, computed convergence rates, and number of iterations $k$ such that $|\tilde{F}(s_k)| < Tol$ versus the number of subintervals $N$ are listed in the tables. The initial approximation to $s$ was set $s_0 = 0.2$ in all three tables. In order to minimize the number of iterations $k$, the tolerance was set $Tol=10^{-5}$ for Newton's method and $Tol=10^{-10}$ for Halley's method in Table \ref{TABLE-ex1-alpha}, $Tol=10^{-10}$ for both shooting techniques in Table \ref{TABLE-ex2-alpha}, and $Tol=10^{-15}$ for Newton's method and $Tol=10^{-16}$ for Halley's method in Table \ref{TABLE-ex3-alpha}. Numerical results shown in the tables demonstrate that for all suggested pairs of fractional orders, rates of convergence approach 2 for Newton's method, 3 for Halley's method that are theoretical convergence rates of HPCMs. In Examples \ref{TABLE-ex1-alpha} and \ref{TABLE-ex3-alpha}, we observe that the number of iterations $k$ required to meet the tolerance at $(0.9, 1.1)$ is relatively greater than other pairs of fractional orders for both Newton's and Halley's method.
	\end{enumerate}

	
	\begin{example}{Consider the following single-term Linear FBVP with RBCs} \label{ex.4}
		\begin{equation*}
			\begin{cases}
				D^{\alpha_2}_{0}y(t) = \varphi(t)-(2t+6)y'(t)\\
				y(0)-\frac{1}{1-\alpha_2}y'(0) = \gamma_1,~y(1)+y'(1) = \gamma_2,
			\end{cases}
		\end{equation*}
		where $1 < \alpha_2 < 2$,
		\begin{equation*}
		\begin{split}
	\varphi(t) = \frac{\Gamma(\alpha_2+1)}{\Gamma(1)}+\frac{\Gamma(2\alpha_2)}{\Gamma(\alpha_2)}t^{(\alpha_2-1)}+4\frac{\Gamma(4)}{\Gamma(4-\alpha_2)}t^{(3-\alpha_2)}+&\frac{\Gamma(5)}{\Gamma(5-\alpha_2)}t^{(4-\alpha_2)}\\ 
		+&(2t+6)(\alpha_2 t^{\alpha_2-1}+(2\alpha_2-1)t^{2\alpha_2-2}+3+12t^2+4t^3),
		\end{split}
		\end{equation*}
		and the exact solution is $y(t) = t^{\alpha_2}+t^{2\alpha_2-1}+1+3t+4t^3+t^4$ \cite{Huang2}.
	\end{example}
	Since $D_0^{\alpha_1}y(t), \ y'(t), \ D_0^{\alpha_2}y(t)$ do not belong to $C^3[0,1]$, global error estimates of HPCMs in Theorems \ref{apndx2-thm-linear} and \ref{apndx2-thm-quad} cannot be applied to Example \ref{ex.4}. Alternatively, we adopt the linear explicit method described in \ref{apndx3} with proposed shooting techniques. 
	In this example, we compare the accuracy and convergence rate of the approximate solution obtained by the proposed shooting technique based on Newton's method with the modified integral discretization scheme \cite{Huang2} for each $\alpha_2 = 1.1, 1.3, 1.5, 1.7, 1.9$. In Table \ref{TABLE-ex4}, we can observe that our proposed method shows equal performance to the modified integral discretization scheme \cite{Huang2}.
	
	
	\begin{example}{Consider the following single-term Linear FBVP with RBCs} \label{ex.5}
		\begin{equation*}
			\begin{cases}
				D^{\alpha_2}_{0}y(t) = F(t)-\cos(t)y(t)-\sin(t)y'(t)\\
				y(0)-\frac{1}{1-\alpha_2}y'(0) = \gamma_1,~y(1)+y'(1) = \gamma_2
			\end{cases}
		\end{equation*}
		whose $1 < \alpha < 2$ and the exact solution is
		\begin{equation*}
			y(t) = \sin(\lam t) - t + \frac{t^3}{6}.
		\end{equation*}
	\end{example}
	
	In Example \ref{ex.5}, we compare the performance of our proposed methods with the modified integral discretization scheme \cite{Huang2}. Table \ref{TABLE-ex5} shows pointwise absolute errors and computed convergence profiles versus the number of subintervals for each $\alpha_2 = 1.1, 1.3, 1.5, 1.7, 1.9$. In Table \ref{TABLE-ex5}, we can see that for all values of $\alpha_2$, the computed rates of convergence obtained by the proposed shooting technique based on Halley's method combined with third order HPCM are around 3.0 while the computed rates of convergence obtained by the modified integral discretization scheme \cite{Huang2} are around 2.0. \\
	The algorithm of the proposed shooting techniques with second-order HPCM requires less than the number of arithmetic operations needed by the modified integral discretization scheme to solve a FBVP with RBCs than the modified integral discretization scheme \cite{Huang2} because the predictor and corrector in HPCMs share the computation of the memory effect. In practice this results the proposed shooting technique based on Newton's method consume less CPU than the modified discretization scheme \cite{Huang2} and the CPU time executed by the proposed shooting technique based on Halley's method is approximately equal to the CPU time executed by the modified integral discretization scheme \cite{Huang2} as shown in Table \ref{TABLE-ex5}.

	
	\section{Conclusion}\label{sec5}
	We introduced new numerical schemes for solving FNBVPs with any RBCs. The idea was to transform a FNBVP into a system of FIVPs. By doing that we could adopt a pre-existing numerical method for solving the system of FIVPs and we mainly employed HPCMs. The unknown IC $s$ in the system was approximated by proposed shooting methods based on Newton's and Halley's method and this is the main algorithm of proposed schemes.
	Under the assumption that $m$ is large enough so that $|\tilde{F}(s_m)|$ is small enough, theoretical convergence rates of proposed methods were $\mathcal{O}(h^2)$ for shooting with Newton's method and $\mathcal{O}(h^3)$ for shooting with Halley's method on account of global error estimates in HPCMs.\\
	In Examples \ref{ex.1} through \ref{ex.3}, we verified that proposed schemes can handle double-term FNVBPs with RBCs whose exact solutions include polynomial, exponential, and sine function. Convergence profiles obtained by proposed schemes were computed as expected by the global error estimates. However, Tables \ref{TABLE-ex1-alpha}, \ref{TABLE-ex2-alpha}, and \ref{TABLE-ex3-alpha} point out that the convergence rate of the sequence $\{s_k\}$ depends on fractional orders. We still need to address an error analysis of shooting techniques based on Newton's and Halley's methods for solving a system of FIVPs. This will be considered in a subsequent paper.
	Examples \ref{ex.4} and \ref{ex.5} demonstrated the performance of proposed methods for solving single-term linear FBVPs with exact solutions having low regularity and high regularity, respectively. Tables \ref{TABLE-ex4} and \ref{TABLE-ex5} showed that proposed methods can deal with not only nonlinear FBVPs but also linear FBVPs. In Example \ref{ex.4}, we adopted the linear explicit method described in \ref{apndx3} and this shows that the proposed shooting techniques can be assembled with not only HPCMs but also other pre-existing numerical schemes for solving a system of FIVPs.
	In Example \ref{ex.5}, we observed that computed convergence rates obtained by our proposed shooting technique based on Halley's method with third order HPCM are higher than the modified integral discretization scheme \cite{Huang2}.

	\section*{Acknowledgments}
	This work was supported  by the National Research Foundation of Korea(NRF)
	grant funded by the Korea government(MSIP) (NRF-2016R1D1A1B03935514). 

\appendix
\section{Tables of Numerical Results}\label{apndx1}
\subsection{Example \ref{ex.1}: FNBVP whose type of exact solution is polynomial}
	\begin{table}[H]
	\centering
	\begin{adjustbox}{max width=\textwidth}
		\begin{tabular}{c|ccccc|ccccc}
			\toprule
			&  \multicolumn{5}{c}{Newton's method} & \multicolumn{5}{|c}{Halley's method}\\
			\midrule
			$m$ & $s_0=0.2$ & $s_0=0.4$ & $s_0=0.6$ & $s_0=0.8$ & $s_0=1.0$ & $s_0=0.2$ & $s_0=0.4$ & $s_0=0.6$ & $s_0=0.8$ & $s_0=1.0$ \\
			\toprule
			1     & 0.604011 & 1.305541 & 2.105771 & 3.00776  & 4.017051 & 0.603191 & 1.304704 & 2.104838 & 3.006644 & 4.01565 \\
			2     & 0.042234 & 1.48E-01 & 0.303846 & 5.05E-01 & 0.753495 & 0.054329 & 1.90E-01 & 0.392297 & 6.59E-01 & 0.995211 \\
			3     & 0.000274 & 3.25E-03 & 0.01249  & 3.10E-02 & 0.061341 & 0.000604 & 6.84E-03 & 0.025715 & 6.31E-02 & 0.124161 \\
			4     & 7.31E-08 & 7.33E-07 & 2.17E-05 & 1.46E-04 & 0.000578 & 9.11E-08 & 8.24E-06 & 1.34E-04 & 0.000813 & 0.003036 \\
			5     & 2.28E-11 & 2.29E-10 & 6.71E-09 & 4.22E-08 & 1.25E-07 & 2.58E-11 & 2.32E-09 & 3.41E-08 & 8.56E-08 & 1.15E-06 \\
			6     & 7.11E-15 & 7.19E-14 & 2.10E-12 & 1.32E-11 & 3.91E-11 & 6.22E-15 & 6.55E-13 & 9.64E-12 & 2.42E-11 & 3.26E-10 \\
			7     & 0        & 0        & 8.88E-16 & 4.44E-15 & 1.07E-14 & 8.88E-16 & 0        & 2.66E-15 & 6.22E-15 & 9.15E-14 \\
			8     & 0        & 0        & 0        & 0        & 1.78E-15 & 8.88E-16 & 0        & 8.88E-16 & 0        & 8.88E-16 \\
			9     & 0        & 0        & 0        & 0        & 0        & 8.88E-16 & 0        & 8.88E-16 & 0        & 8.88E-16 \\
			10    & 0        & 0        & 0        & 0        & 0        & 8.88E-16 & 0        & 8.88E-16 & 0        & 8.88E-16 \\
			\bottomrule
		\end{tabular}%
	\end{adjustbox}
	\caption{Errors of shooting techniques $\vert \tilde{F}(s_m) \vert$ in \eqref{eqn:app_err_ft}  versus the maximum number of iterations with various values of $s_0$ in Example \ref{ex.1}. We set $h=0.01, \alpha_1=0.4, \alpha_2=1.7$}
	\label{TABLE-ex1-m}
\end{table}%

\begin{table}[H]
	\centering
	\begin{adjustbox}{max width=\textwidth}
		\begin{tabular}{l|cccc|cccc}
			\toprule
			& \multicolumn{4}{c}{Newton's method}     &      \multicolumn{4}{|c}{Halley's method} \\
			\midrule
			& \multicolumn{2}{c}{$s_0=0.2$} & \multicolumn{2}{c}{$s_0=1.0$} & \multicolumn{2}{|c}{$s_0=0.2$} & \multicolumn{2}{c}{$s_0=1.0$} \\
			\midrule
			$N$ & Max. error & Rate & Max. error & Rate & Max. error & Rate & Max. error & Rate \\ 
			\toprule
			10    & 1.17E-02 & -     & 1.17E-02 & -     & 1.02E-03 & -     & 1.02E-03 & -         \\
			20    & 5.63E-03 & 1.059 & 5.63E-03 & 1.059 & 1.83E-04 & 2.486 & 1.83E-04 & 2.486 \\
			40    & 1.78E-03 & 1.665 & 1.78E-03 & 1.665 & 3.63E-05 & 2.333 & 3.63E-05 & 2.332 \\
			80    & 4.92E-04 & 1.855 & 4.92E-04 & 1.854 & 5.47E-06 & 2.730 & 5.47E-06 & 2.730 \\
			160   & 1.29E-04 & 1.931 & 1.29E-04 & 1.931 & 7.51E-07 & 2.866 & 7.51E-07 & 2.866 \\
			320   & 3.30E-05 & 1.966 & 3.30E-05 & 1.966 & 9.88E-08 & 2.925 & 9.88E-08 & 2.925 \\
			\bottomrule
		\end{tabular}%
	\end{adjustbox}
	\caption{Pointwise absolute errors in the maximum norm and computed rates of convergence versus the number of subintervals $N$ with $s_0 = 0.2, 1.0$ for each in Example \ref{ex.1}. We set $\alpha_1 = 0.4, \alpha_2 = 1.7$, and $m$ is fixed at 10.}
	\label{TABLE-ex1-maxerr}
\end{table}%

\begin{table}[H]
	\centering
	\begin{adjustbox}{max width=\textwidth}
		\begin{tabular}{ll|ccccc|ccccc}
			\toprule
			& & \multicolumn{5}{c}{Newton's method}     &      \multicolumn{5}{|c}{Halley's method} \\
			\midrule
			& & \multicolumn{5}{c}{$(\alpha_1, \alpha_2)$}     &      \multicolumn{5}{|c}{$(\alpha_1, \alpha_2)$} \\
			\midrule
			$N$ & & $(0.9, 1.1)$ & $(0.7, 1.3)$ & $(0.5,1.5)$ & $(0.3, 1.7)$ & $(0.1, 1.9)$ & $(0.9, 1.1)$ & $(0.7, 1.3)$ & $(0.5,1.5)$ & $(0.3, 1.7)$ & $(0.1, 1.9)$\\
			\midrule
			\toprule
			64    & Max. error & 1.17E-03 & 9.45E-05 & 6.06E-04 & 7.72E-04 & 7.94E-04 & 5.05E-05 & 4.17E-06 & 9.12E-06 & 1.03E-05 & 7.17E-06 \\
			& Rate  &   -    &     -  &  -     &     -  &     -  &  -     &   -    &   -    &   -    & - \\
			& k     & 8     & 5     & 4     & 4     & 4     & 13    & 7     & 6     & 5     & 5 \\
			128   & Max. error & 2.55E-04 & 6.60E-05 & 1.72E-04 & 2.05E-04 & 2.09E-04 & 5.61E-06 & 2.42E-07 & 1.38E-06 & 1.46E-06 & 1.05E-06 \\
			& Rate  & 2.196 & 5.176 & 1.817 & 1.911 & 1.925 & 3.168 & 4.105 & 2.728 & 2.826 & 2.777 \\
			& k     & 8     & 4     & 4     & 4     & 4     & 11    & 7     & 6     & 5     & 5 \\
			256   & Max. error & 5.46E-05 & 2.44E-05 & 4.59E-05 & 5.29E-05 & 5.38E-05 & 6.24E-07 & 7.59E-08 & 1.89E-07 & 1.95E-07 & 1.43E-07 \\
			& Rate  & 2.223 & 1.435 & 1.907 & 1.955 & 1.959 & 3.170 & 1.673 & 2.866 & 2.905 & 2.871 \\
			& k     & 7     & 4     & 4     & 4     & 4     & 12    & 6     & 5     & 5     & 5 \\
			512   & Max. error & 1.16E-05 & 7.58E-06 & 1.19E-05 & 1.35E-05 & 1.37E-05 & 6.85E-08 & 1.39E-08 & 2.48E-08 & 2.53E-08 & 1.90E-08 \\
			& Rate  & 2.236 & 1.687 & 1.949 & 1.976 & 1.976 & 3.186 & 2.449 & 2.929 & 2.942 & 2.914 \\
			& k     & 8     & 4     & 4     & 4     & 4     & 12    & 6     & 5     & 5     & 5 \\
			1024  & Max. error & 2.41E-06 & 2.15E-06 & 3.03E-06 & 3.39E-06 & 3.45E-06 & 7.45E-09 & 2.12E-09 & 3.19E-09 & 3.25E-09 & 2.48E-09 \\
			& Rate  & 2.261 & 1.818 & 1.970 & 1.988 & 1.987 & 3.202 & 2.712 & 2.960 & 2.962 & 2.936 \\
			& k     & 8     & 4     & 4     & 4     & 4     & 11    & 6     & 5     & 5     & 5 \\
			2048  & Max. error & 4.95E-07 & 5.31E-07 & 7.67E-07 & 8.49E-07 & 8.63E-07 & 8.03E-10 & 3.02E-10 & 4.01E-10 & 4.18E-10 & 3.21E-10 \\
			& Rate  & 2.288 & 2.018 & 1.984 & 1.998 & 1.999 & 3.214 & 2.811 & 2.988 & 2.961 & 2.949 \\
			& k     & 8     & 4     & 4     & 4     & 4     & 11    & 6     & 5     & 5     & 5 \\
			\bottomrule
		\end{tabular}%
	\end{adjustbox}
	\caption{Pointwise absolute errors in the maximum norm and computed rates of convergence obtained by the proposed schemes with various fractional orders versus the number of subintervals in Example \ref{ex.1}. We set $s_0=0.2, Tol=10^{-5}$ for shooting with Newton's method and $s_0=0.2, Tol=10^{-10}$ for Halley's method.}
	\label{TABLE-ex1-alpha}
\end{table}%

\subsection{Example \ref{ex.2}: FNBVP whose exact solution involves an exponential function}
	\begin{table}[H]
	\centering
	\begin{adjustbox}{max width=\textwidth}
		\begin{tabular}{c|ccccc|ccccc}
			\toprule
			&  \multicolumn{5}{c}{Newton's method} & \multicolumn{5}{|c}{Halley's method}\\
			\midrule
			$m$ & $s_0=0.2$ & $s_0=0.4$ & $s_0=0.6$ & $s_0=0.8$ & $s_0=1.0$ & $s_0=0.2$ & $s_0=0.4$ & $s_0=0.6$ & $s_0=0.8$ & $s_0=1.0$ \\
			\toprule
			1     & 0.414113 & 0.890547 & 1.42639  & 2.02066  & 2.674035 & 0.414046 & 0.890413 & 1.426145 & 2.020256 & 2.673415 \\
			2     & 0.027748 & 9.51E-02 & 0.190648 & 3.11E-01 & 0.456212 & 0.03142  & 1.08E-01 & 0.219523 & 3.66E-01 & 0.549419 \\
			3     & 0.000177 & 1.93E-03 & 0.007068 & 1.70E-02 & 0.032652 & 0.000258 & 2.80E-03 & 0.010391 & 2.54E-02 & 0.050225 \\
			4     & 1.51E-08 & 9.56E-07 & 1.20E-05 & 6.71E-05 & 0.000243 & 1.61E-08 & 2.10E-06 & 2.89E-05 & 1.70E-04 & 0.000646 \\
			5     & 6.59E-13 & 4.20E-11 & 5.57E-10 & 3.99E-09 & 2.45E-08 & 1.25E-13 & 1.52E-11 & 1.33E-12 & 6.56E-09 & 1.08E-07 \\
			6     & 0        & 1.89E-15 & 2.44E-14 & 1.75E-13 & 1.07E-12 & 5.55E-17 & 1.67E-16 & 1.11E-16 & 5.11E-14 & 8.40E-13 \\
			7     & 0        & 0        & 0        & 0        & 1.67E-16 & 5.55E-17 & 5.55E-17 & 5.55E-17 & 5.55E-17 & 5.55E-17 \\
			8     & 0        & 0        & 0        & 0        & 5.55E-17 & 5.55E-17 & 5.55E-17 & 5.55E-17 & 5.55E-17 & 0 \\
			9     & 0        & 0        & 0        & 0        & 0        & 5.55E-17 & 5.55E-17 & 5.55E-17 & 5.55E-17 & 0 \\
			10    & 0        & 0        & 0        & 0        & 0        & 5.55E-17 & 5.55E-17 & 5.55E-17 & 5.55E-17 & 0 \\
			\bottomrule
		\end{tabular}%
	\end{adjustbox}
	\caption{Errors of shooting techniques $\vert \tilde{F}(s_m) \vert$ in \eqref{eqn:app_err_ft} versus the maximum number of iterations $m$ for each $s_0$ in Example \ref{ex.2} as we set $\alpha_1 = 0.4, \alpha_2=1.7, h=0.01$.}
	\label{TABLE-ex2-m}
\end{table}%

\begin{table}[H]
	\centering
	\begin{adjustbox}{max width=\textwidth}
		\begin{tabular}{l|cccc|cccc}
			\toprule
			& \multicolumn{4}{c}{Newton's method}     &      \multicolumn{4}{|c}{Halley's method} \\
			\midrule
			& \multicolumn{2}{c}{$s_0=0.2$} & \multicolumn{2}{c}{$s_0=1.0$} & \multicolumn{2}{|c}{$s_0=0.2$} & \multicolumn{2}{c}{$s_0=1.0$} \\
			\midrule
			$N$ & Max. error & Rate & Max. error & Rate & Max. error & Rate & Max. error & Rate \\ 
			\toprule
			10    & 1.10E-03 & -     & 1.10E-03 & -     & 5.65E-05 & -     & 5.65E-05 & -         \\
			20    & 4.05E-04 & 1.434 & 4.05E-04 & 1.434 & 1.71E-05 & 1.725 & 1.71E-05 & 1.725 \\
			40    & 1.19E-04 & 1.765 & 1.19E-04 & 1.765 & 2.82E-06 & 2.599 & 2.82E-06 & 2.599 \\
			80    & 3.21E-05 & 1.893 & 3.21E-05 & 1.893 & 3.99E-07 & 2.824 & 3.99E-07 & 2.824 \\
			160   & 8.32E-06 & 1.949 & 8.32E-06 & 1.949 & 5.29E-08 & 2.913 & 5.29E-08 & 2.913 \\
			320   & 2.12E-06 & 1.974 & 2.12E-06 & 1.974 & 6.84E-09 & 2.952 & 6.84E-09 & 2.952 \\
			\bottomrule
		\end{tabular}%
	\end{adjustbox}
	\caption{Pointwise absolute errors in the maximum norm and computed rates of convergence versus the number of subintervals $N$ in Example \ref{ex.2} as we set $\alpha_1 = 0.4, \alpha_2 = 1.7, h=0.01,m=10$ and no $Tol$. }
	\label{TABLE-ex2-maxerr}
\end{table}%

\begin{table}[H]
	\centering
	\begin{adjustbox}{max width=\textwidth}
		\begin{tabular}{ll|ccccc|ccccc}
			\toprule
			& & \multicolumn{5}{c}{Newton's method}     &      \multicolumn{5}{|c}{Halley's method} \\
			\midrule
			& & \multicolumn{5}{c}{$(\alpha_1, \alpha_2)$}     &      \multicolumn{5}{|c}{$(\alpha_1, \alpha_2)$} \\
			\midrule
			$N$ & & $(0.9, 1.1)$ & $(0.7, 1.3)$ & $(0.5,1.5)$ & $(0.3, 1.7)$ & $(0.1, 1.9)$ & $(0.9, 1.1)$ & $(0.7, 1.3)$ & $(0.5,1.5)$ & $(0.3, 1.7)$ & $(0.1, 1.9)$\\
			\midrule
			\toprule
			64    & Max. error & 1.25E-04 & 1.10E-05 & 4.15E-05 & 4.97E-05 & 5.10E-05 & 3.63E-06 & 9.39E-08 & 6.56E-07 & 7.63E-07 & 6.59E-07 \\
			& Rate  &  -     &    -   &    -   &  -     &    -   &   -    &    -   &   -    &    -   &  -\\
			& k     & 5     & 5     & 5     & 5     & 5     & 6     & 5     & 5     & 5     & 5 \\
			128   & Max. error & 2.71E-05 & 5.55E-06 & 1.14E-05 & 1.29E-05 & 1.32E-05 & 4.00E-07 & 3.39E-08 & 9.36E-08 & 1.02E-07 & 8.86E-08 \\
			& Rate  & 2.201 & 9.891 & 1.861 & 1.942 & 1.955 & 3.181 & 1.471 & 2.808 & 2.899 & 2.896 \\
			& k     & 5     & 5     & 5     & 5     & 5     & 6     & 5     & 5     & 5     & 5 \\
			256   & Max. error & 5.80E-06 & 1.89E-06 & 3.00E-06 & 3.30E-06 & 3.35E-06 & 4.36E-08 & 6.32E-09 & 1.25E-08 & 1.33E-08 & 1.16E-08 \\
			& Rate  & 2.226 & 1.555 & 1.928 & 1.970 & 1.974 & 3.197 & 2.421 & 2.905 & 2.944 & 2.935 \\
			& k     & 5     & 5     & 5     & 5     & 5     & 6     & 5     & 5     & 5     & 5 \\
			512   & Max. error & 1.22E-06 & 5.61E-07 & 7.71E-07 & 8.34E-07 & 8.47E-07 & 4.72E-09 & 1.04E-09 & 1.62E-09 & 1.70E-09 & 1.49E-09 \\
			& Rate  & 2.249 & 1.752 & 1.962 & 1.984 & 1.983 & 3.209 & 2.604 & 2.950 & 2.966 & 2.955 \\
			& k     & 5     & 5     & 5     & 5     & 5     & 6     & 5     & 5     & 5     & 5 \\
			1024  & Max. error & 2.52E-07 & 1.56E-07 & 1.96E-07 & 2.10E-07 & 2.14E-07 & 5.06E-10 & 1.52E-10 & 2.06E-10 & 2.16E-10 & 1.91E-10 \\
			& Rate  & 2.273 & 1.848 & 1.979 & 1.991 & 1.988 & 3.221 & 2.779 & 2.973 & 2.978 & 2.965 \\
			& k     & 5     & 5     & 5     & 5     & 5     & 6     & 5     & 5     & 5     & 5 \\
			2048  & Max. error & 5.12E-08 & 4.17E-08 & 4.93E-08 & 5.27E-08 & 5.37E-08 & 5.39E-11 & 2.12E-11 & 2.58E-11 & 2.75E-11 & 2.44E-11 \\
			& Rate  & 2.301 & 1.902 & 1.988 & 1.994 & 1.991 & 3.232 & 2.839 & 2.999 & 2.973 & 2.972 \\
			& k     & 5     & 5     & 5     & 5     & 5     & 6     & 5     & 5     & 5     & 5 \\
			\bottomrule
		\end{tabular}%
	\end{adjustbox}
	\caption{Pointwise absolute errors in the maximum norm and computed rates of convergence obtained by the proposed schemes with various fractional orders versus the number of subintervals $N$ in Example \ref{ex.2} as we set $s_0=0.2, Tol=10^{-10}$ for both shooting techniques}
	\label{TABLE-ex2-alpha}
\end{table}%

\subsection{Example \ref{ex.3}: FNBVP whose exact solution involves a sine function}
	\begin{table}[H]
	\centering
	\begin{adjustbox}{max width=\textwidth}
		\begin{tabular}{c|ccccc|ccccc}
			\toprule
			&  \multicolumn{5}{c}{Newton's method} & \multicolumn{5}{|c}{Halley's method}\\
			\midrule
			$m$ & $s_0=0.2$ & $s_0=0.4$ & $s_0=0.6$ & $s_0=0.8$ & $s_0=1.0$ & $s_0=0.2$ & $s_0=0.4$ & $s_0=0.6$ & $s_0=0.8$ & $s_0=1.0$ \\
			\toprule
			1     & 0.487617 & 0.891569 & 1.202993 & 1.407814 & 1.485109 & 0.487722 & 0.891834 & 1.203474 & 1.408578 & 1.486244 \\
			2     & 0.101684 & 3.44E-01 & 0.653431 & 9.74E-01 & 1.258505 & 0.106265 & 3.56E-01 & 0.668984 & 9.90E-01 & 1.270771 \\
			3     & 0.004491 & 5.07E-02 & 0.182979 & 4.14E-01 & 0.72559  & 0.005185 & 5.68E-02 & 0.199315 & 4.40E-01 & 0.757881 \\
			4     & 1.26E-05 & 1.14E-03 & 0.014409 & 0.073158 & 0.226095 & 1.69E-05 & 1.51E-03 & 1.80E-02 & 8.68E-02 & 0.255888 \\
			5     & 1.13E-08 & 1.57E-06 & 0.000101 & 0.002343 & 0.021949 & 1.54E-08 & 2.39E-06 & 1.63E-04 & 3.48E-03 & 2.96E-02 \\
			6     & 1.01E-11 & 1.41E-09 & 9.49E-08 & 4.43E-06 & 0.000225 & 1.39E-11 & 2.16E-09 & 1.59E-07 & 8.62E-06 & 4.22E-04 \\
			7     & 9.06E-15 & 1.26E-12 & 8.48E-11 & 3.97E-09 & 2.22E-07 & 1.25E-14 & 1.95E-12 & 1.44E-10 & 7.82E-09 & 4.62E-07 \\
			8     & 6.94E-18 & 1.12E-15 & 7.58E-14 & 3.55E-12 & 1.99E-10 & 6.94E-18 & 1.76E-15 & 1.30E-13 & 7.06E-12 & 4.17E-10 \\
			9     & 0        & 1.39E-17 & 6.94E-17 & 3.16E-15 & 1.78E-13 & 6.94E-18 & 6.94E-18 & 1.25E-16 & 6.37E-15 & 3.77E-13 \\
			10    & 0        & 6.94E-18 & 6.94E-18 & 6.94E-18 & 1.67E-16 & 6.94E-18 & 0        & 6.94E-18 & 6.94E-18 & 3.33E-16 \\
			\bottomrule
		\end{tabular}%
	\end{adjustbox}
	\caption{Errors of shooting techniques $\vert \tilde{F}(s_m) \vert$ in \eqref{eqn:app_err_ft}  versus the maximum number of iterations $m$ with various initial approximations $s_0$ in Example \ref{ex.3} as we set $\alpha_1 = 0.4, \alpha_2 = 1.7, h = 0.01$.}
	\label{TABLE-ex3-m}
\end{table}%

\begin{table}[H]
	\centering
	\begin{adjustbox}{max width=\textwidth}
		\begin{tabular}{l|cccc|cccc}
			\toprule
			& \multicolumn{4}{c}{Newton's method}     &      \multicolumn{4}{|c}{Halley's method} \\
			\midrule
			& \multicolumn{2}{c}{$s_0=0.2$} & \multicolumn{2}{c}{$s_0=1.0$} & \multicolumn{2}{|c}{$s_0=0.2$} & \multicolumn{2}{c}{$s_0=1.0$} \\
			\midrule
			$N$ & Max. error & Rate & Max. error & Rate & Max. error & Rate & Max. error & Rate \\ 
			\toprule
			10    & 1.73E-04 & -     & 1.73E-04 & -     & 1.19E-05 & -     & 1.19E-05 & -     \\
			20    & 6.85E-05 & 1.338 & 6.85E-05 & 1.338 & 3.97E-06 & 1.578 & 3.97E-06 & 1.578 \\
			40    & 2.07E-05 & 1.727 & 2.07E-05 & 1.727 & 6.62E-07 & 2.586 & 6.62E-07 & 2.586 \\
			80    & 5.63E-06 & 1.877 & 5.63E-06 & 1.877 & 9.28E-08 & 2.834 & 9.28E-08 & 2.834 \\
			160   & 1.47E-06 & 1.942 & 1.47E-06 & 1.942 & 1.22E-08 & 2.926 & 1.22E-08 & 2.926 \\
			320   & 3.74E-07 & 1.971 & 3.74E-07 & 1.971 & 1.56E-09 & 2.966 & 1.56E-09 & 2.966 \\
			\bottomrule
		\end{tabular}%
	\end{adjustbox}
	\caption{Pointwise absolute errors in the maximum norm and computed rates of convergence versus the number of subintervals $N$ in Example \ref{ex.3} as we set $\alpha_1 = 0.4, \alpha_2 = 1.7, m=10$.}
	\label{TABLE-ex3-maxerr}
\end{table}%

\begin{table}[H]
	\centering
	\begin{adjustbox}{max width=\textwidth}
		\begin{tabular}{ll|ccccc|ccccc}
			\toprule
			& & \multicolumn{5}{c}{Newton's method}     &      \multicolumn{5}{c}{Halley's method} \\
			\midrule
			& & \multicolumn{5}{c}{$(\alpha_1, \alpha_2)$}     &      \multicolumn{5}{|c}{$(\alpha_1, \alpha_2)$} \\
			\midrule
			$N$ & & $(0.9, 1.1)$ & $(0.7, 1.3)$ & $(0.5,1.5)$ & $(0.3, 1.7)$ & $(0.1, 1.9)$ & $(0.9, 1.1)$ & $(0.7, 1.3)$ & $(0.5,1.5)$ & $(0.3, 1.7)$ & $(0.1, 1.9)$\\
			\midrule
			\toprule
			64    & Max. error & 2.26E-05 & 6.92E-07 & 6.70E-06 & 8.99E-06 & 9.95E-06 & 7.65E-07 & 5.55E-08 & 1.33E-07 & 1.84E-07 & 1.95E-07 \\
			& Rate  & -      & -      & -      & -      & -      & -      & -      & -      & -      & - \\
			& k     & 20    & 10    & 9     & 8     & 6     & 22    & 10    & 9     & 8     & 7 \\
			128   & Max. error & 5.04E-06 & 6.51E-07 & 1.88E-06 & 2.35E-06 & 2.57E-06 & 8.54E-08 & 2.93E-09 & 1.95E-08 & 2.43E-08 & 2.55E-08 \\
			& Rate  & 2.166 & 8.900 & 1.833 & 1.937 & 1.954 & 3.164 & 4.244 & 2.767 & 2.920 & 2.933 \\
			& k     & 20    & 10    & 9     & 8     & 7     & 22    & 11    & 9     & 8     & 7 \\
			256   & Max. error & 1.10E-06 & 2.50E-07 & 4.99E-07 & 6.00E-07 & 6.54E-07 & 9.41E-09 & 8.67E-10 & 2.64E-09 & 3.11E-09 & 3.27E-09 \\
			& Rate  & 2.192 & 1.381 & 1.914 & 1.968 & 1.974 & 3.181 & 1.754 & 2.888 & 2.962 & 2.964 \\
			& k     & 20    & 10    & 9     & 8     & 7     & 22    & 11    & 9     & 8     & 7 \\
			512   & Max. error & 2.38E-07 & 7.81E-08 & 1.29E-07 & 1.52E-07 & 1.65E-07 & 1.03E-09 & 1.72E-10 & 3.43E-10 & 3.94E-10 & 4.14E-10 \\
			& Rate  & 2.211 & 1.679 & 1.955 & 1.983 & 1.984 & 3.194 & 2.331 & 2.944 & 2.981 & 2.979 \\
			& k     & 20    & 10    & 9     & 8     & 7     & 22    & 10    & 9     & 8     & 7 \\
			1024  & Max. error & 5.08E-08 & 2.23E-08 & 3.27E-08 & 3.82E-08 & 4.17E-08 & 1.12E-10 & 2.71E-11 & 4.37E-11 & 4.96E-11 & 5.23E-11 \\
			& Rate  & 2.228 & 1.810 & 1.975 & 1.990 & 1.989 & 3.205 & 2.669 & 2.971 & 2.990 & 2.986 \\
			& k     & 20    & 10    & 9     & 8     & 7     & 22    & 10    & 9     & 8     & 7 \\
			2048  & Max. error & 1.07E-08 & 6.05E-09 & 8.26E-09 & 9.59E-09 & 1.05E-08 & 1.20E-11 & 3.90E-12 & 5.50E-12 & 6.25E-12 & 6.58E-12 \\
			& Rate  & 2.246 & 1.880 & 1.986 & 1.994 & 1.991 & 3.215 & 2.798 & 2.990 & 2.990 & 2.991 \\
			& k     & 20    & 10    & 9     & 8     & 7     & 22    & 11    & 9     & 8     & 7 \\
			\bottomrule
		\end{tabular}%
	\end{adjustbox}
	\caption{Pointwise absolute errors in the maximum norm and computed rates of convergence obtained by the proposed schemes with various fractional orders versus the number of subintervals in Example \ref{ex.3}. We set $s_0=0.2$ for both shooting techniques but $Tol = 10^{-15} (10^{-16})$ for Newton's method (Halley's method), respectively.}
	\label{TABLE-ex3-alpha}
\end{table}%

\subsection{Example \ref{ex.4}: Linear FBPV}
\begin{table}[H]
	\centering
	\centering\small\addtolength{\tabcolsep}{-2.0pt}
	\begin{tabular}{c|cc|cc|cc|cc|cc}
		\toprule
		\multicolumn{11}{c}{Modified integral discretization scheme \cite{Huang2}}\\
		\hline
		& \multicolumn{2}{c|}{$\alpha_2 = 1.1$}       & \multicolumn{2}{c|}{$\alpha_2 = 1.3$}       & \multicolumn{2}{c|}{$\alpha_2 = 1.5$}      & \multicolumn{2}{c|}{$\alpha_2 = 1.7$}       & \multicolumn{2}{c}{$\alpha_2 = 1.9$}  \\
		\hline
		\multicolumn{1}{c}{N} & \multicolumn{1}{c}{Max. error} & \multicolumn{1}{c|}{Rate} & \multicolumn{1}{c}{Max. error} & \multicolumn{1}{c|}{Rate} & \multicolumn{1}{c}{Max. error} & \multicolumn{1}{c|}{Rate} & \multicolumn{1}{c}{Max. error} & \multicolumn{1}{c|}{Rate} & \multicolumn{1}{c}{Max. error} & \multicolumn{1}{c}{Rate} \\
		\hline
		64    & 9.56E-03 & -     & 5.14E-03 & -     & 3.91E-03 & -     & 3.76E-03 & -     & 4.07E-03 & -     \\
		128   & 3.89E-03 & 1.298 & 1.60E-03 & 1.686 & 1.05E-03 & 1.895 & 9.44E-04 & 1.995 & 1.02E-03 & 1.996 \\
		256   & 1.65E-03 & 1.236 & 5.20E-04 & 1.618 & 2.87E-04 & 1.872 & 2.37E-04 & 1.997 & 2.55E-04 & 1.998 \\
		512   & 7.22E-04 & 1.193 & 1.78E-04 & 1.548 & 8.01E-05 & 1.840 & 5.92E-05 & 1.998 & 6.39E-05 & 1.999 \\
		1024  & 3.22E-04 & 1.165 & 6.36E-05 & 1.484 & 2.30E-05 & 1.803 & 1.51E-05 & 1.971 & 1.60E-05 & 1.999 \\
		2048  & 1.45E-04 & 1.148 & 2.36E-05 & 1.432 & 6.77E-06 & 1.762 & 3.90E-06 & 1.955 & 4.00E-06 & 2.000 \\
		\midrule
		\multicolumn{11}{c}{Newton's method}\\
		\hline
		& \multicolumn{2}{c|}{$\alpha_2 = 1.1$}       & \multicolumn{2}{c|}{$\alpha_2 = 1.3$}       & \multicolumn{2}{c|}{$\alpha_2 = 1.5$}      & \multicolumn{2}{c|}{$\alpha_2 = 1.7$}       & \multicolumn{2}{c}{$\alpha_2 = 1.9$}  \\
		\hline
		\multicolumn{1}{c}{N} & \multicolumn{1}{c}{Max. error} & \multicolumn{1}{c|}{Rate} & \multicolumn{1}{c}{Max. error} & \multicolumn{1}{c|}{Rate} & \multicolumn{1}{c}{Max. error} & \multicolumn{1}{c|}{Rate} & \multicolumn{1}{c}{Max. error} & \multicolumn{1}{c|}{Rate} & \multicolumn{1}{c}{Max. error} & \multicolumn{1}{c}{Rate} \\
		\hline
		64    & 4.42E-03 & -     & 1.31E-03 & -     & 1.14E-03 & -     & 1.27E-03 & -     & 1.42E-03 & -     \\
		128   & 1.89E-03 & 1.225 & 3.83E-04 & 1.775 & 2.95E-04 & 1.955 & 3.17E-04 & 1.999 & 3.55E-04 & 2.000 \\
		256   & 8.10E-04 & 1.224 & 1.27E-04 & 1.588 & 7.62E-05 & 1.955 & 7.92E-05 & 2.002 & 8.87E-05 & 2.000 \\
		512   & 3.48E-04 & 1.219 & 6.31E-05 & 1.012 & 1.97E-05 & 1.949 & 1.98E-05 & 2.004 & 2.22E-05 & 2.000 \\
		1024  & 1.50E-04 & 1.213 & 2.88E-05 & 1.130 & 5.15E-06 & 1.938 & 4.92E-06 & 2.006 & 5.54E-06 & 2.000 \\
		2048  & 6.91E-05 & 1.120 & 1.26E-05 & 1.191 & 1.36E-06 & 1.922 & 1.22E-06 & 2.008 & 1.38E-06 & 2.001 \\
		\hline
		\bottomrule
	\end{tabular}%
	\caption{Pointwise absolute errors and computed rates of convergence obtained by the modified integral discretization scheme \cite{Huang2} and Newton's method. We set $s_0=0.2, Tol=10^{-6}$.}
	\label{TABLE-ex4}
\end{table}%

\subsection{Example \ref{ex.5}: Linear FBVP}
\begin{table}[H]
	\centering
	\centering\small\addtolength{\tabcolsep}{-4.9pt}
	\begin{tabular}{c|ccc|ccc|ccc|ccc|ccc}
		\toprule
		\multicolumn{16}{c}{modified integral discretization scheme \cite{Huang2}}\\
		\hline
		& \multicolumn{3}{c|}{$\alpha_2 = 1.1~$}       & \multicolumn{3}{c|}{$\alpha_2 = 1.3$}       & \multicolumn{3}{c|}{$\alpha_2 = 1.5$}      & \multicolumn{3}{c|}{$\alpha_2 = 1.7$}       & \multicolumn{3}{c}{$\alpha_2 = 1.9$}  \\
		\hline
		& \multicolumn{1}{c}{Max. error} & \multicolumn{1}{c}{Rate} & \multicolumn{1}{c|}{Time} & \multicolumn{1}{c}{Max. error} & \multicolumn{1}{c}{Rate} & \multicolumn{1}{c|}{Time} & \multicolumn{1}{c}{Max. error} & \multicolumn{1}{c}{Rate} & \multicolumn{1}{c|}{Time} & \multicolumn{1}{c}{Max. error} & \multicolumn{1}{c}{Rate} & \multicolumn{1}{c|}{Time} & \multicolumn{1}{c}{Max. error} & \multicolumn{1}{c}{Rate} & \multicolumn{1}{c}{Time} \\
		\hline
		10    & 1.93E-03 & -      & 0.02 & 1.67E-03 & -      & 0.02 & 1.43E-03 & -      & 0.02 & 1.29E-03 & -      & 0.02 & 1.21E-03 & -      & 0.02 \\
		20    & 5.48E-04 & 1.816  & 0.02 & 4.68E-04 & 1.831  & 0.02 & 3.97E-04 & 1.847  & 0.02 & 3.54E-04 & 1.862  & 0.02 & 3.32E-04 & 1.870  & 0.02 \\
		40    & 1.46E-04 & 1.907  & 0.09 & 1.24E-04 & 1.915  & 0.09 & 1.04E-04 & 1.925  & 0.09 & 9.28E-05 & 1.932  & 0.09 & 8.68E-05 & 1.937  & 0.1 \\
		80    & 3.78E-05 & 1.948  & 0.34 & 3.21E-05 & 1.954  & 0.34 & 2.68E-05 & 1.962  & 0.35 & 2.38E-05 & 1.966  & 0.34 & 2.22E-05 & 1.969  & 0.34 \\
		160   & 9.67E-06 & 1.968  & 1.35 & 8.16E-06 & 1.974  & 1.45 & 6.80E-06 & 1.980  & 1.34 & 6.01E-06 & 1.983  & 1.36 & 5.60E-06 & 1.984  & 1.35 \\
		320   & 2.45E-06 & 1.979  & 5.43 & 2.06E-06 & 1.984  & 5.44 & 1.71E-06 & 1.990  & 5.33 & 1.51E-06 & 1.991  & 5.32 & 1.41E-06 & 1.992  & 5.39 \\
		
		\midrule
		\multicolumn{16}{c}{Newton's method}\\
		\hline
		& \multicolumn{3}{c|}{$\alpha_2 = 1.1~$}       & \multicolumn{3}{c|}{$\alpha_2 = 1.3$}       & \multicolumn{3}{c|}{$\alpha_2 = 1.5$}      & \multicolumn{3}{c|}{$\alpha_2 = 1.7$}       & \multicolumn{3}{c}{$\alpha_2 = 1.9$}  \\
		\hline
		& \multicolumn{1}{c}{Max. error} & \multicolumn{1}{c}{Rate} & \multicolumn{1}{c|}{Time} & \multicolumn{1}{c}{Max. error} & \multicolumn{1}{c}{Rate} & \multicolumn{1}{c|}{Time} & \multicolumn{1}{c}{Max. error} & \multicolumn{1}{c}{Rate} & \multicolumn{1}{c|}{Time} & \multicolumn{1}{c}{Max. error} & \multicolumn{1}{c}{Rate} & \multicolumn{1}{c|}{Time} & \multicolumn{1}{c}{Max. error} & \multicolumn{1}{c}{Rate} & \multicolumn{1}{c}{Time} \\
		\hline
		10    & 7.27E-03 & -      & 0.01 & 1.97E-03 & -      & 0.02 & 9.45E-04 & -      & 0.01 & 6.80E-04 & -      & 0.02 & 5.80E-04 & -      & 0.19 \\
		20    & 1.78E-03 & 2.033  & 0.02 & 3.92E-04 & 2.329  & 0.02 & 1.97E-04 & 2.263  & 0.02 & 1.56E-04 & 2.124  & 0.02 & 1.40E-04 & 2.047  & 0.02 \\
		40    & 3.92E-04 & 2.180  & 0.07 & 7.80E-05 & 2.328  & 0.07 & 4.38E-05 & 2.170  & 0.06 & 3.75E-05 & 2.055  & 0.07 & 3.47E-05 & 2.016  & 0.08 \\
		80    & 8.38E-05 & 2.226  & 0.24 & 1.62E-05 & 2.272  & 0.24 & 1.03E-05 & 2.093  & 0.23 & 9.25E-06 & 2.021  & 0.24 & 8.64E-06 & 2.005  & 0.24 \\
		160   & 1.78E-05 & 2.236  & 0.93 & 3.50E-06 & 2.205  & 0.93 & 2.49E-06 & 2.046  & 0.89 & 2.30E-06 & 2.007  & 0.94 & 2.16E-06 & 2.001  & 0.93 \\
		320   & 3.79E-06 & 2.232  & 3.58 & 7.93E-07 & 2.144  & 3.57 & 6.12E-07 & 2.021  & 3.55 & 5.74E-07 & 2.002  & 3.62 & 5.39E-07 & 2.000  & 3.56 \\
		\midrule
		\multicolumn{16}{c}{Halley's method}\\
		\hline
		& \multicolumn{3}{c|}{$\alpha_2 = 1.1~$}       & \multicolumn{3}{c|}{$\alpha_2 = 1.3$}       & \multicolumn{3}{c|}{$\alpha_2 = 1.5$}      & \multicolumn{3}{c|}{$\alpha_2 = 1.7$}       & \multicolumn{3}{c}{$\alpha_2 = 1.9$}  \\
		\hline
		& \multicolumn{1}{c}{Max. error} & \multicolumn{1}{c}{Rate} & \multicolumn{1}{c|}{Time} & \multicolumn{1}{c}{Max. error} & \multicolumn{1}{c}{Rate} & \multicolumn{1}{c|}{Time} & \multicolumn{1}{c}{Max. error} & \multicolumn{1}{c}{Rate} & \multicolumn{1}{c|}{Time} & \multicolumn{1}{c}{Max. error} & \multicolumn{1}{c}{Rate} & \multicolumn{1}{c|}{Time} & \multicolumn{1}{c}{Max. error} & \multicolumn{1}{c}{Rate} & \multicolumn{1}{c}{Time} \\
		\hline
		10    & 1.40E-03 & -      & 0.02 & 2.79E-04 & -     & 0.02 & 1.08E-04 & -      & 0.02 & 6.95E-05 & -      & 0.02 & 5.51E-05 & -      & 0.02 \\
		20    & 1.55E-04 & 3.169  & 0.03 & 2.57E-05 & 3.441 & 0.03 & 1.06E-05 & 3.339  & 0.03 & 7.70E-06 & 3.174  & 0.03 & 6.56E-06 & 3.070  & 0.03 \\
		40    & 1.65E-05 & 3.237  & 0.11 & 2.44E-06 & 3.397 & 0.11 & 1.15E-06 & 3.211  & 0.11 & 9.19E-07 & 3.066  & 0.11 & 8.12E-07 & 3.014  & 0.11 \\
		80    & 1.73E-06 & 3.253  & 0.37 & 2.43E-07 & 3.325 & 0.40 & 1.33E-07 & 3.111  & 0.37 & 1.13E-07 & 3.023  & 0.37 & 1.02E-07 & 3.000  & 0.37 \\
		160   & 1.82E-07 & 3.252  & 1.40 & 2.56E-08 & 3.248 & 1.42 & 1.60E-08 & 3.055  & 1.38 & 1.41E-08 & 3.007  & 1.41 & 1.27E-08 & 2.998  & 1.40  \\
		320   & 1.92E-08 & 3.244  & 5.47 & 2.82E-09 & 3.180 & 5.44 & 1.96E-09 & 3.026  & 5.71 & 1.76E-09 & 3.002  & 5.44 & 1.59E-09 & 2.999  & 5.45 \\
		
		\hline
		\bottomrule
	\end{tabular}%
	\caption{Pointwise absolute errors and computed rates of convergence obtained by the modified integral discretization scheme \cite{Huang2} and proposed methods: Newton's method, Halley's method. We set $s_0=0.2, Tol=10^{-5}$.}
	\label{TABLE-ex5}
\end{table}%



\section{Linear Explicit Method}\label{apndx3}
Let us consider the following linear single-term FBVP with RBCs:
\begin{equation}
	\begin{cases}\label{direct-eqn1}
		D_{0}^{\alpha_2}y(t) = f(t)+c(t)y(t)+b(t)y'(t), \\
		a_1y(0)+b_1y'(0) = \gamma_1,~~a_2y(1)+b_2y'(1) = \gamma_2,
	\end{cases}
\end{equation}
where $1 < \alpha_2 < 2$.
By Lemmas \ref{Lemma_of_Gronwall_ineq} through \ref{order_com_lemma} and Theorem \ref{epsapprox}, the FBVP \eqref{direct-eqn1} is equivalent to the following system
\begin{equation}\label{direct-eqn2}
	\begin{cases}
		D_{0}^{1-\epsilon}y(t) = z(t) & y(0) = s,\\
		D_{0}^{\alpha}z(t) = f(t) + c(t)y(t) + b(t)z(t) & z(0) = (\gamma_1-a_1s)/b_1.
	\end{cases}
\end{equation}
Expressing the solution of \eqref{direct-eqn2} as the discretized form of the Volterra integral equation which is equivalent to \eqref{direct-eqn2}, we obtain
\begin{equation}\label{direct-eqn3}
	\begin{aligned}
		y(t_{n+1}) &= y(0) + \frac{1}{\Gamma(1-\epsilon)}\int_{t_0}^{t_{n+1}}(t_{n+1}-\tau)^{-\epsilon}z(\tau)d\tau,\\
		z(t_{n+1}) &= z(0)+J_0^{\alpha_2-1}f(t_{n+1})+\frac{1}{\Gamma(\alpha_2)}\int_{t_0}^{t_{n+1}}(t_{n+1}-\tau)^{\alpha_2-1}\Bigl{(} c(\tau)y(\tau)+b(\tau)z(\tau) \Bigr{)} d\tau.
	\end{aligned}
\end{equation}
The approximation solutions to $y(t_{n+1})$ and $z(t_{n+1})$ in \eqref{direct-eqn3} with $s_k$ can be explicitly described as follows:
\begin{equation*}
	\begin{aligned}
		y_{n+1} &= s_k + \frac{1}{\Gamma(1-\epsilon)}\sum_{j=0}^{n}\int_{t_{j}}^{t_{j+1}}(t_{n+1}-\tau)^{-\epsilon}z(\tau)d\tau,\\
		z_{n+1} &= \frac{\gamma_1-a_1s_k}{b_1}+J_0^{\alpha_2-1}f(t_{n+1})+\frac{1}{\Gamma(\alpha_2)}\sum_{j=0}^{n}\int_{t_j}^{t_{j+1}}(t_{n+1}-\tau)^{\alpha_2-1}\Bigl{(} c(\tau)y(\tau)+b(\tau)z(\tau) \Bigr{)} d\tau.
	\end{aligned}
\end{equation*}
Replacing $y(\tau)$ and $z(\tau)$ with linear interpolation, we have
\begin{equation}\label{direct-eqn4}
	\begin{aligned}
		y_{n+1} &= s_k + \frac{1}{\Gamma(1-\epsilon)}\sum_{j=0}^{n}\int_{t_{j}}^{t_{j+1}}(t_{n+1}-\tau)^{-\epsilon}\Bigl{(} \frac{t_{j+1}-\tau}{h}z_j+\frac{t_j-\tau}{-h}z_{j+1} \Bigr{)}d\tau,\\
		z_{n+1} &= \frac{\gamma_1-a_1s_k}{b_1} + J_0^{\alpha_2-1}f(t_{n+1})+\\
		&\hspace{1cm}\frac{1}{\Gamma(\alpha_2)}\sum_{j=0}^{n}\int_{t_j}^{t_{j+1}}(t_{n+1}-\tau)^{\alpha_2-1}\left\{ \frac{t_{j+1}-\tau}{h}\Bigl{(}c(t_j)y_j+b(t_j)z_j \Bigr{)} +\frac{t_j-\tau}{-h}\Bigl{(}c(t_{j+1})y_{j+1}+b(t_{j+1})z_{j+1} \Bigr{)}\right\} d\tau.
	\end{aligned}
\end{equation}
Let us shorten the expression of $y_{n+1}, z_{n+1}$ in \eqref{direct-eqn4} as follows:
\begin{equation*}
	\begin{aligned}
		y_{n+1} &= s_k + \sum_{j=0}^{n}\Bigr{[} A^1_jz_j+A^2_jz_{j+1} \Bigr{]},\\
		z_{n+1} &= \frac{\gamma_1-a_1s_k}{b_1}+J_0^{\alpha_2-1}f(t_{n+1})+\sum_{j=0}^{n-1}\Bigr{[} B^1_j\Bigl{(}c(t_j)y_j+b(t_j)z_j \Bigr{)} + B^2_j\Bigl{(}c(t_{j+1})y_{j+1}+b(t_{j+1})z_{j+1} \Bigr{)}\Bigr{]}\\
		&\hspace{1cm} + B^1_n\Bigl{(}c(t_n)y_n+b(t_n)z_n \Bigr{)} + B^2_n\Bigl{(}c(t_{n+1})y_{n+1}+b(t_{n+1})z_{n+1} \Bigr{)}.
	\end{aligned}
\end{equation*}
We omit describing definitions of $A^i_j, B^i_j, \ i=1,2, \ j=0,\ldots, n$ because it is straightforward.
Substituting the explicit form of $y_{n+1}$ into the right-hand side of $z_{n+1}$, we have
\begin{equation}\label{direct-eqn5}
	\begin{aligned}
		z_{n+1} &= \frac{\gamma_1-a_1s_k}{b_1} + J_0^{\alpha_2-1}f(t_{n+1})+\sum_{j=0}^{n-1}\Bigr{[} B^1_j\Bigl{(}c(t_j)y_j+b(t_j)z_j \Bigr{)} + B^2_j\Bigl{(}c(t_{j+1})y_{j+1}+b(t_{j+1})z_{j+1} \Bigr{)}\Bigr{]}\\
		&\hspace{1cm} + B^1_n\Bigl{(}c(t_n)y_n+b(t_n)z_n \Bigr{)} + B^2_n\Bigr{[}c(t_{n+1})\left\{s_k + \sum_{j=0}^{n}\Bigl{(} A^1_jz_j+A^2_jz_{j+1} \Bigr{)} \right\}+b(t_{n+1})z_{n+1} \Bigr{]},\\
		&= \frac{\gamma_1-a_1s_k}{b_1} + J_0^{\alpha_2-1}f(t_{n+1})+\sum_{j=0}^{n-1}\Bigr{[} B^1_j\Bigl{(}c(t_j)y_j+b(t_j)z_j \Bigr{)} + B^2_j\Bigl{(}c(t_{j+1})y_{j+1}+b(t_{j+1})z_{j+1} \Bigr{)}\Bigr{]}+ B^1_n\Bigl{(}c(t_n)y_n+b(t_n)z_n \Bigr{)}\\
		&\hspace{1cm}+ B^2_nc(t_{n+1})\left\{s_k + \sum_{j=0}^{n-1}\Bigl{(} A^1_jz_j+A^2_jz_{j+1} \Bigr{)}+A^1_nz_n \right\}+\Bigl{(}B_n^2A_{n}^2c(t_{n+1})+b(t_{n+1})\Bigr{)}z_{n+1}.
	\end{aligned}
\end{equation}
Since the right-hand side of \eqref{direct-eqn5} is linear in $z_j, j=0,\ldots, n+1$, $z_{n+1}$ can be explicitly expressed as follows:
\begin{equation*}
	\begin{aligned}
		z_{n+1} &= \bigg[ \frac{\gamma_1-a_1s_k}{b_1} + J_0^{\alpha_2-1}f(t_{n+1})+\sum_{j=0}^{n-1}\Bigr{[} B^1_j\Bigl{(}c(t_j)y_j+b(t_j)z_j \Bigr{)} + B^2_j\Bigl{(}c(t_{j+1})y_{j+1}+b(t_{j+1})z_{j+1} \Bigr{)}\Bigr{]}+ B^1_n\Bigl{(}c(t_n)y_n+b(t_n)z_n \Bigr{)}\\
		&\hspace{1cm} + B^2_nc(t_{n+1})\left\{s_k + \sum_{j=0}^{n-1}\Bigl{(} A^1_jz_j+A^2_jz_{j+1} \Bigr{)}+A^1_nz_n \right\}\bigg]\bigg/ \bigg[ 1- \left\{ B_n^2A_{n}^2c(t_{n+1})+b(t_{n+1})\right\} \bigg].
	\end{aligned}
\end{equation*}


\section{References}
\bibliographystyle{elsarticle-num}
\bibliography{reference}

\begin{thebibliography}{10}
\expandafter\ifx\csname url\endcsname\relax
  \def\url#1{\texttt{#1}}\fi
\expandafter\ifx\csname urlprefix\endcsname\relax\def\urlprefix{URL }\fi
\expandafter\ifx\csname href\endcsname\relax
  \def\href#1#2{#2} \def\path#1{#1}\fi

\bibitem{nguyen2017high}
T.~B. Nguyen, B.~Jang, A high-order predictor-corrector method for solving
  nonlinear differential equations of fractional order, Fractional Calculus and
  Applied Analysis 20~(2) (2017) 447--476.

\bibitem{Caputo}
M.~Caputo, Linear model of dissipation whose {Q} is almost frequency
  independent -- {II}, The Geophysical Journal of the Royal Astronomical
  Society 13 (1967) 529--539.

\bibitem{Dalir}
M.~Dalir, M.~Bashour, Applications of fractional calculus, Appl. Math. Sci. 4
  (2010) 1021--1032.

\bibitem{Das}
S.~Das, Functional Fractional Calculus for System Identification and Controls,
  Springer, New York, 2008.

\bibitem{Hilfer}
R.~Hilfer, Applications of Fractional Calculus in Physics, World Scientific,
  Singapore, 2000.

\bibitem{Magin}
R.~Magin, Fractional Calculus in Bioengineering, Begell House Publishers, 2006.

\bibitem{Podlubny}
I.~Podlubny, Fractional differential equations, Academic Press, San Diego,
  1999.

\bibitem{diethelm2010analysis}
K.~Diethelm, The analysis of fractional differential equations: An
  application-oriented exposition using differential operators of Caputo type,
  Springer Science \& Business Media, 2010.

\bibitem{Huang3}
Z.~Cen, J.~Huang, A.~Le, A modified integral discretization scheme for a
  two-point boundary value problem with a caputo fractional derivative, J.
  Comput. Appl. Math. 367~(15) (2020) 112465.

\bibitem{Huang2}
Z.~Cen, J.~Huang, A.~Xu, An efficient numerical method for a two-point boundary
  value problem with a caputo fractional derivative an efficient numerical
  method for a two-point boundary value problem with a caputo fractional
  derivative, J. Comput. Appl. Math. 336 (2018) 1--7.

\end{thebibliography}

\end{document}